\documentclass[11pt]{article}

\usepackage{lineno,hyperref}
\modulolinenumbers[5]

\usepackage{epsfig,psfrag}
\usepackage{amsmath,amsthm}
\usepackage{amssymb}
\usepackage{graphicx}
\usepackage{subcaption}
\usepackage{latexsym,pifont,color,comment}
\usepackage{mathtools}
\usepackage{placeins}
\usepackage[utf8]{inputenc}
\usepackage[english]{babel}

\setlength{\topmargin}{-.5in}
\setlength{\textwidth}{7.0in}
\setlength{\textheight}{9in}
\setlength{\oddsidemargin}{-.15in}
\setlength{\evensidemargin}{-.15in}

\def\mbbR{\mathbb{R}}
\def\mbbN{\mathbb{N}}

\def\rarrow{\rightarrow}

\def\mcD{\mathcal{D}}
\def\mcO{\mathcal{O}}

\def\mcI{\mathcal{I}}
\def\mcL{\mathcal{L}}

\def\reals{{{\rm l} \kern -.15em {\rm R} }}

\usepackage{multirow}

\newtheorem{definition}{Definition}[section]
\newtheorem{theorem}{Theorem}[section]

\newtheorem{lemma}[theorem]{Lemma}

%%%%%%%%%%%%%%%%%%%%%%%

\begin{document}

\title{\huge A Tunably-Accurate Laguerre Petrov-Galerkin Spectral Method for Multi-Term Fractional Differential Equations on the Half Line} 
\author{{Anna Lischke$\,^{\mathrm{a}}$, Mohsen Zayernouri$\,^{\mathrm{b},\mathrm{c}}$, and George Em Karniadakis$\,^{\mathrm{a}}$}\medskip\\
\small $^{\mathrm{a}}$ Division of Applied Mathematics, Brown  University, Providence, RI 02912, USA \vspace*{-0.05cm}\\
\small $^{\mathrm{b}}$Department of Computational Mathematics, Science, and Engineering,  \\ \small Michigan State University, 428 S. Shaw Lane, East Lansing, MI 48824, USA \\
\small $^{\mathrm{c}}$Department of Mechanical Engineering, Michigan State University, 428 S. Shaw Lane,  \\ \small  East Lansing, MI 48824, USA }
\date{}
\maketitle

\begin{abstract}
We present a new tunably-accurate Laguerre Petrov-Galerkin spectral method for solving 
linear multi-term fractional initial value problems with derivative orders at most one and 
constant coefficients on the half line. Our 
method results in a matrix equation of special structure which can be solved in $\mcO(N \log N)$ operations. 
We also take advantage of recurrence relations 
for the generalized associated Laguerre functions (GALFs) in order to derive explicit expressions for the entries of the 
stiffness and mass matrices, which can be factored into the product of a diagonal matrix and a lower-triangular 
Toeplitz matrix. The resulting spectral method is efficient for solving multi-term fractional differential 
equations with arbitrarily many terms. We apply this method to a distributed order differential equation, which is 
approximated by linear multi-term equations through the Gauss-Legendre quadrature rule. We provide numerical 
examples demonstrating the spectral convergence and linear complexity of the method.
\end{abstract}

%%%%%%%%%%%%%%%%%%%%%%%%%%%%%%%%%%%%%%%%%%%%%%%%%%

\section{Introduction}
While numerical methods for fractional differential equations have been investigated for over two 
decades \cite{brunner, delves, podlubny, edwards}, the main difficulty in developing these methods,  
unlike their integer-order counterparts, is the large computational cost arising due to the non-local nature of fractional 
differential operators. For example, in finite difference \cite{podlubny, celik, chen, ding} or finite element methods 
\cite{deng, ervin}, data at all grid points or elements are needed in order to achieve an accurate 
approximation to the fractional derivative at a single grid point or element. This results in methods 
that are significantly more complex in both implementation and computational cost than methods 
for integer-order counterparts.

Recently, spectral methods have been applied to these problems, offering the benefit of more 
natural non-local approximations in addition to high accuracy in the case of smooth solutions. For 
non-smooth solutions with singularity of type $(x-a)^\alpha$ (where $a$ is the left-endpoint of the 
approximation interval), we find that using an approximation of the form $(x-a)^\alpha p(x),$ with $p(x)$ 
a polynomial approximation to the smooth part of the solution, will also lead to numerical 
approximations with a high order of accuracy. Zayernouri and Karniadakis 
derived functions of this type as eigenfunctions of fractional Sturm-Liouville problems on a compact 
interval \cite{zayer1,zayer2, zayernouri2015tempered, Zayernouri14-SIAM-Collocation, Zayernouri14-SIAM-Frac-Advection}. Recently, Khosravian-Arab et al. extended this work to fractional Sturm-Liouville 
problems on the half line and derived the generalized associated Laguerre functions (GALFs) \cite{FSL}. Zhang et al. analyzed spectral methods on the half line for a single-term fractional 
initial value problem using a generalized version of the GALFs \cite{zhang}.

In the literature, there are not many spectral methods for the type of multi-term fractional 
differential equations considered in this work. The existing methods include spectral collocation and 
tau methods \cite{baleanu, bhrawy}, but not Galerkin spectral methods. The linear systems resulting from 
these methods are dense and lead to large condition numbers. In this work, we propose an efficient 
Laguerre Petrov-Galerkin spectral method for multi-term fractional initial value problems (FIVPs) 
on the half line, which leads to sparse and well-conditioned linear systems.

The equations considered in this work are motivated by the approximation of distributed order differential 
equations using a quadrature rule, as in the paper by Diethelm and Ford \cite{diethelm}. This type of equation arises in many physical and biological applications: for example, in applications to viscoelastic oscillators \cite{oscillator}, distributed order membranes in the ear \cite{ear}, 
dielectric induction \cite{caputo1}, and anomalous diffusion \cite{caputo2, sokolov}. In their paper, 
Diethelm and Ford considered distributed order equations of the form
\begin{align}
	\int_0^m a(r) {}_0\mcD_t^r u(t) \ dr &= f(t),
\end{align}
to which they applied the trapezoid quadrature rule to derive a multi-term fractional differential equation 
on a bounded interval. 
To improve the quality of the approximation, many terms in the resulting multi-term equation may be needed. 
The efficiency and high order of accuracy of our proposed method offers the capability of accurately solving 
equations with many terms with low computational cost. 

The multi-term FIVPs considered in this work have fractional order at most one. There is reason to consider 
this an important problem, as it is possible to reduce any linear multi-term fractional equation to a 
system of multi-term fractional equations with order at most one \cite{edwards}.

In the new Petrov-Galerkin method presented in the following sections, we also introduce a tuning 
parameter enabling us to ``speed up" the rate of convergence of the method. Our method 
is also very efficient, as we are able to solve the resulting matrix equation in $\mcO(N \log N)$ operations.

One of the key aspects of our method is the approximation basis functions used, which are the 
eigenfunctions of a fractional singular Sturm-Liouville problem \cite{FSL}. We present some analysis 
that shows how we can use the fractional Sturm-Liouville operator to determine the decay rate of the 
coefficients of the Petrov-Galerkin approximation. We also use recurrence relations for Laguerre polynomials 
to derive explicit expressions for the entries of the stiffness matrices in the Petrov-Galerkin method. This offers 
savings in the cost of assembling these matrices since we avoid using quadrature, in addition to 
avoiding the (potentially large) Gauss-Laguerre quadrature error in stiffness matrix entries.

Another key aspect of the derivation of the Petrov-Galerkin method is fractional integration 
by parts, which we perform in such a way as to offer flexibility in what order of the derivative is 
transferred from the trial function to the test function in the variational form. We demonstrate 
how this flexibility translates into a tunably-accurate method through the derivation of the method and 
with numerical experiments.

The remainder of the paper is organized as follows. Section 2 introduces the multi-term 
fractional initial value problem along with the notation and definitions used throughout the paper. 
In Section 3, we introduce our Petrov-Galerkin spectral method and discuss its computational 
cost. In Section 4, we present numerical examples using fabricated solutions as well as a short 
analysis of the decay rates of the coefficients of the Galerkin projection. In Section 5, we introduce 
distributed order fractional initial value problems as an application of our PG method for multi-term 
equations, with numerical examples in Section 6. Finally, Section 7 offers a summary of our results 
and directions for future research.

%%%%%%%%%%%%%%%%%%%%%%%%%%%%%%%%%%%%%%%%%%%%%%%%%%

\section{Preliminaries}

\subsection{Notation and definitions}

We are interested in solving the multi-term fractional initial value problem (FIVP) with constant 
coefficients $\{b_i\}_{i=1}^K,$ on the interval $t \in (0,+\infty)$:
\begin{align}
\begin{split}
	\sum_{i=1}^K b_i \ {}_0\mcD_t^{\nu_i} u(t) &= f(t), \\
	u(0) &= 0,
\end{split}
\end{align}
where ${}_0\mcD_t^{\nu_i}$ represents the Riemann-Liouville fractional derivative of order 
$\nu_i \in (0,1)$ for all $i = 1,2,\dots,K$. Notice that if the initial condition $u(0)$ is not 
equal to zero, then we can simply apply the same method to solving the modified FIVP
\begin{align}
\begin{split}
	\sum_{i=1}^K b_i \ {}_0\mcD_t^{\nu_i} (u-u_0)(t) &= f(t), \\
	u(0) &= u_0.
\end{split}
\end{align}

\begin{definition} \cite{samko}
	Let $\alpha > 0.$ The left- and right-sided Riemann-Liouville fractional integrals of order $\alpha$ 
	on the semi-infinite interval $(0,+\infty)$ are defined as
	\begin{align}
		\label{def:lsint}
		{}_0\mcI_t^\alpha u(t) &:= \frac{1}{\Gamma(\alpha)} \int_0^t u(s) (t-s)^{\alpha-1} \ ds, \ \ t > 0, \\
		{}_t\mcI_\infty^\alpha u(t) &:= \frac{1}{\Gamma(\alpha)} \int_t^\infty u(s) (s-t)^{\alpha-1} \ ds, \ \ t > 0,
	\end{align}
	where $\Gamma(\cdot)$ denotes the Euler Gamma function.
\end{definition}

\noindent Note that as the right-sided integral is defined on the interval $(t, +\infty),$ $u$ must be a 
function with suitable decay properties as $t \rarrow \infty$ so that this integral is well-defined.

\begin{definition} \cite{samko}
	Let $\nu \in \mbbR_+$ be the order of differentiation on the semi-infinite interval $(0,+\infty)$, and define $m$ such 
	that $m-1 \leq \nu \leq m.$ Then the left- and right-sided Riemann-Liouville derivatives are given by
	\begin{align}
	\label{def:lsrl}
		{}_0\mcD_t^\nu u(t) &= \frac{1}{\Gamma(m-\nu)} \frac{d^m}{dt^m} \int_0^t u(s) (t-s)^{m-\nu-1} \ ds, \ \ \ \ t > 0, \\
		{}_t\mcD_\infty^\nu u(t) &= \frac{1}{\Gamma(m-\nu)} \frac{(-d)^m}{dt^m} \int_t^\infty u(s) (s-t)^{m-\nu-1} \ ds, \ \ \ \ t > 0.
	\end{align}
\end{definition}

%%%%%%%%%%%%%%%%%%%%%%%%%%%%%%%%%%%%%%%%%%%%%%%%%%

\subsection{Fractional Sturm-Liouville problem on the half line}

Following \cite{FSL}, we consider the fractional Sturm-Liouville problem of 
the first kind (FSLP-1) on the half line, and we use the following theorem.
\begin{theorem} \cite{FSL}
The exact eigenfunctions of the following FSLP-1
\begin{align}
\label{FSLP1}
	\mcL_{\alpha,\beta}^1[\phi] &:= {}_t\mcD_\infty^\alpha p_1(t) {}_0\mcD_t^\alpha \phi(t) -\lambda_n^1\omega_1^\beta \phi(t) = 0,
\end{align}
where $\alpha \in (0,1),$ and
\begin{align}
	p_1(t) &= t^{\alpha - \beta}e^{-t}, \ \ \ \omega_1^\beta (t) = t^{-\beta}e^{-t}
\end{align}
subject to the boundary values
\begin{align}
	\phi(0) = 0, \ \ \ {}_t\mcI_\infty^{1-\alpha}(p_1(t) {}_0\mcD_t^\alpha y(t)) \bigg|_{t = \infty} = 0,
\end{align}
are given as
\begin{align}
	\phi_n^{\beta,1}(t) = t^\beta L_n^{(\beta)}(t), \ \ n = 0,1,2, \dots, 
\end{align}
where $\beta > 0$ and the corresponding distinct eigenvalues are
\begin{align}
	\lambda_n^1 &= \frac{\Gamma(n+\beta+1)}{\Gamma(n+\beta-\alpha+1)}, \ \ n = 0,1,2,\dots.
\end{align}
\end{theorem}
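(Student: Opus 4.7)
The plan is to verify the claim directly by substituting $\phi(t) = t^\beta L_n^{(\beta)}(t)$ into the operator $\mcL_{\alpha,\beta}^1$ and showing the eigenvalue equation reduces to an identity with $\lambda_n^1 = \Gamma(n+\beta+1)/\Gamma(n+\beta-\alpha+1)$. The computation naturally splits into three steps: (i) evaluating the inner left-sided derivative ${}_0\mcD_t^\alpha \phi$, (ii) multiplying by the weight $p_1(t)=t^{\alpha-\beta}e^{-t}$, and (iii) evaluating the outer right-sided derivative ${}_t\mcD_\infty^\alpha$. The boundary conditions and the eigenvalue then drop out.

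First I would exploit the fact that $L_n^{(\beta)}(t)$ is a polynomial of degree $n$, so $t^\beta L_n^{(\beta)}(t)$ is a finite sum of powers $t^{k+\beta}$. Using the elementary formula ${}_0\mcD_t^\alpha t^{k+\beta}=\tfrac{\Gamma(k+\beta+1)}{\Gamma(k+\beta-\alpha+1)}t^{k+\beta-\alpha}$ and the explicit series
\begin{equation*}
L_n^{(\beta)}(t) \;=\; \sum_{k=0}^n \frac{\Gamma(n+\beta+1)}{\Gamma(k+\beta+1)(n-k)!}\,\frac{(-t)^k}{k!},
\end{equation*}
the gamma factors telescope cleanly and yield the closed form
\begin{equation*}
{}_0\mcD_t^\alpha\bigl[t^\beta L_n^{(\beta)}(t)\bigr] \;=\; \frac{\Gamma(n+\beta+1)}{\Gamma(n+\beta-\alpha+1)}\, t^{\beta-\alpha} L_n^{(\beta-\alpha)}(t).
\end{equation*}
Multiplying by $p_1(t)=t^{\alpha-\beta}e^{-t}$ then gives the much simpler expression $\frac{\Gamma(n+\beta+1)}{\Gamma(n+\beta-\alpha+1)}\,e^{-t}L_n^{(\beta-\alpha)}(t)$, eliminating the algebraic powers of $t$ entirely.

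The main obstacle is the outer right-sided derivative, since the power-rule approach used in step (i) does not apply on $(t,\infty)$. I expect the cleanest route is an identity of the form
\begin{equation*}
{}_t\mcD_\infty^\alpha\bigl[e^{-s}L_n^{(\gamma)}(s)\bigr](t) \;=\; e^{-t}L_n^{(\gamma+\alpha)}(t),
\end{equation*}
which I would establish either by the change of variables $s=t+\tau$ in the definition (\ref{def:lsrl}) combined with the generating function $\sum_n L_n^{(\gamma)}(s)z^n=(1-z)^{-\gamma-1}\exp(-sz/(1-z))$ and the integral representation $\int_0^\infty \tau^{-\alpha}e^{-c\tau}d\tau=c^{\alpha-1}\Gamma(1-\alpha)$, or alternatively by verifying the corresponding right-sided fractional integral identity and differentiating. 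Applying this identity with $\gamma=\beta-\alpha$ produces $e^{-t}L_n^{(\beta)}(t)=\omega_1^\beta(t)\cdot t^\beta L_n^{(\beta)}(t)$, so the full action of $\mcL_{\alpha,\beta}^1$ on $\phi_n^{\beta,1}$ is exactly $\bigl[\Gamma(n+\beta+1)/\Gamma(n+\beta-\alpha+1)\bigr]\,\omega_1^\beta\,\phi_n^{\beta,1}$, identifying the eigenvalue.

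Finally I would confirm the two boundary conditions. The condition $\phi_n^{\beta,1}(0)=0$ is immediate for $\beta>0$ since $L_n^{(\beta)}(0)$ is finite. For the weighted endpoint condition at infinity, the computation above shows $p_1(t)\,{}_0\mcD_t^\alpha\phi_n^{\beta,1}(t)$ is a constant multiple of $e^{-t}L_n^{(\beta-\alpha)}(t)$; applying ${}_t\mcI_\infty^{1-\alpha}$ (which is again handled by the change of variables $s=t+\tau$) yields an expression that decays exponentially as $t\to\infty$ and therefore vanishes in the limit. The distinctness of the eigenvalues follows from the strict monotonicity of $n\mapsto \Gamma(n+\beta+1)/\Gamma(n+\beta-\alpha+1)$ for $\alpha\in(0,1)$, completing the verification.
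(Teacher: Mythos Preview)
The paper does not supply its own proof of this theorem: it is quoted verbatim from \cite{FSL} and stated without argument, so there is no in-paper proof to compare against. That said, your verification is correct and is in fact the standard route. The two identities you isolate as the key steps,
\[
{}_0\mcD_t^{\alpha}\bigl[t^{\beta}L_n^{(\beta)}(t)\bigr]=\frac{\Gamma(n+\beta+1)}{\Gamma(n+\beta-\alpha+1)}\,t^{\beta-\alpha}L_n^{(\beta-\alpha)}(t),
\qquad
{}_t\mcD_\infty^{\alpha}\bigl[e^{-t}L_n^{(\gamma)}(t)\bigr]=e^{-t}L_n^{(\gamma+\alpha)}(t),
\]
are precisely the GALF derivative formulas that the paper itself records (again citing \cite{FSL}) in Section~2.3; once those are in hand the eigenvalue equation and boundary conditions follow exactly as you outline. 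Your proposed derivations of these two identities (term-by-term differentiation of the Laguerre series for the left-sided case, and the substitution $s=t+\tau$ combined with the Laguerre generating function for the right-sided case) are sound, and the monotonicity argument for distinctness of the $\lambda_n^1$ is fine. There is no gap.
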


We also have from \cite{FSL} the solution to the fractional Sturm-Liouville 
problem of the second kind (FSLP-2) on the half line.
\begin{theorem} \cite{FSL}
The exact eigenfunctions of the following FSLPs-2
\begin{align}
\label{FSLP2}
	\mcL_{\alpha,\beta}^2[\phi] &:= {}_0\mcD_t^\alpha p_2(t) {}_t\mcD_\infty^\alpha \phi(t) - \lambda_n^2 \omega_2^\beta(t) y(t) = 0,
\end{align}
where $\alpha \in (0,1)$ and 
\begin{align}
	p_2(t) &= t^{\beta+\alpha}e^t, \ \ \ \omega_2^\beta(t) = t^\beta e^t,
\end{align}
subject to the boundary values
\begin{align}
	\lim_{t \rarrow +\infty} y(t) = 0, \ \ \ {}_0\mcI_t^{1-\alpha}(p_2(t) {}_t\mcD_\infty^\alpha \phi(t)) \bigg|_{t=0} = 0,
\end{align}
are given as
\begin{align}
	\phi_n^{\beta,2}(t) &= e^{-t}L_n^{(\beta)} (t), \ \ \ n = 0, 1,2,\dots, 
\end{align}
where $\beta > -1$ and the corresponding distinct eigenvalues are
\begin{align}
	\lambda_n^2 &= \frac{\Gamma(n+\beta+\alpha+1)}{\Gamma(n+\beta+1)}, \ \ \ n = 0,1,2,\dots.
\end{align}
\end{theorem}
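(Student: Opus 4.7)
The plan is to verify directly that $\phi_n^{\beta,2}(t) = e^{-t}L_n^{(\beta)}(t)$ satisfies the eigenvalue equation $\mcL_{\alpha,\beta}^2[\phi_n^{\beta,2}] = 0$ with $\lambda_n^2 = \Gamma(n+\beta+\alpha+1)/\Gamma(n+\beta+1)$, by computing the three-fold composition of operators defining $\mcL_{\alpha,\beta}^2$ in sequence and then to check the two boundary conditions.

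First, I would compute the right-sided derivative ${}_t\mcD_\infty^\alpha[e^{-t}L_n^{(\beta)}(t)]$ using the generating function
\[
\sum_{n=0}^\infty e^{-t}L_n^{(\beta)}(t)\,z^n \;=\; (1-z)^{-\beta-1}\,e^{-t/(1-z)},
\]
which reduces everything to computing ${}_t\mcD_\infty^\alpha[e^{-ct}]$ for the positive constant $c = 1/(1-z)$. A short calculation starting from \eqref{def:lsrl}, using the substitution $s=t+u$ inside the integral and the gamma identity $\int_0^\infty e^{-cu}u^{-\alpha}du = \Gamma(1-\alpha)c^{\alpha-1}$, gives ${}_t\mcD_\infty^\alpha[e^{-ct}] = c^\alpha e^{-ct}$. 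Multiplying by $(1-z)^{-\beta-1}$ and matching coefficients in $z$ yields the remarkably clean identity
\[
{}_t\mcD_\infty^\alpha\bigl[e^{-t}L_n^{(\beta)}(t)\bigr] \;=\; e^{-t}L_n^{(\beta+\alpha)}(t).
\]

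Second, I would multiply by $p_2(t) = t^{\beta+\alpha}e^t$, noting that the $e^{\pm t}$ factors cancel and leave the polynomial $t^{\beta+\alpha}L_n^{(\beta+\alpha)}(t)$, and then apply the left-sided derivative ${}_0\mcD_t^\alpha$. Using the termwise rule ${}_0\mcD_t^\alpha[t^\gamma] = \Gamma(\gamma+1)t^{\gamma-\alpha}/\Gamma(\gamma-\alpha+1)$ applied to the explicit power series of $L_n^{(\beta+\alpha)}$, a short combinatorial collapse gives
\[
{}_0\mcD_t^\alpha\bigl[t^{\beta+\alpha}L_n^{(\beta+\alpha)}(t)\bigr] \;=\; \frac{\Gamma(n+\beta+\alpha+1)}{\Gamma(n+\beta+1)}\,t^\beta L_n^{(\beta)}(t),
\]
which is precisely $\lambda_n^2\,\omega_2^\beta(t)\,\phi_n^{\beta,2}(t)$. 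The boundary conditions are then immediate: the decay condition $\phi_n^{\beta,2}(t) \rarrow 0$ as $t \rarrow +\infty$ follows from the exponential factor, while the condition at $t=0$ becomes ${}_0\mcI_t^{1-\alpha}\bigl[t^{\beta+\alpha}L_n^{(\beta+\alpha)}(t)\bigr]\big|_{t=0}$, which vanishes by termwise integration since $\beta + \alpha > -1$ forces every power to be $t^{\beta+1+k}$ with positive exponent.

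The main obstacle would ordinarily lie in Step 1: the right-sided fractional derivative of $e^{-t}L_n^{(\beta)}(t)$ is awkward to attack by direct series manipulation, since the integral over $(t,\infty)$ does not truncate against the polynomial and one is led through incomplete-gamma or Kummer function expressions before the Laguerre structure re-emerges. The generating-function trick above sidesteps this difficulty cleanly by reducing the problem to the single exponential identity. An equivalent but more conceptual alternative I would pursue in parallel is an \emph{adjointness} argument: fractional integration by parts on $(0,\infty)$ interchanges ${}_0\mcD_t^\alpha$ and ${}_t\mcD_\infty^\alpha$, revealing $\mcL_{\alpha,\beta}^2$ to be the formal adjoint, with respect to a suitable weighted pairing, of the FSLP-1 operator of Theorem~2.1 with the parameter shift $\beta \mapsto \beta+\alpha$; this transparently explains why the two eigenvalue formulas coincide under that shift, and allows the spectrum asserted here to be inherited from Theorem~2.1 once the boundary terms from the integration-by-parts are shown to vanish under the stated endpoint conditions.
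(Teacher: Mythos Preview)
The paper does not prove this theorem; it is quoted verbatim from \cite{FSL} with no argument supplied, so there is no ``paper's own proof'' to compare against. Your direct verification is therefore not redundant with anything in the text, and it is correct: the generating-function computation of ${}_t\mcD_\infty^\alpha[e^{-t}L_n^{(\beta)}(t)]=e^{-t}L_n^{(\beta+\alpha)}(t)$ is clean and matches the formula the paper itself records in Section~2.3, and the subsequent application of ${}_0\mcD_t^\alpha$ to $t^{\beta+\alpha}L_n^{(\beta+\alpha)}(t)$ likewise reproduces the paper's stated left-derivative identity with the correct eigenvalue.

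Two small points. First, the theorem as phrased asserts that these are \emph{the} eigenfunctions and that the eigenvalues are \emph{distinct}; your argument verifies the eigenvalue relation and boundary conditions but does not address completeness or simplicity. Distinctness is easy (the ratio $\Gamma(n+\beta+\alpha+1)/\Gamma(n+\beta+1)$ is strictly increasing in $n$ for $\alpha>0$), while completeness would follow from the weighted $L^2$-completeness of $\{L_n^{(\beta)}\}$; you may want to add a sentence. Second, the interchange of ${}_t\mcD_\infty^\alpha$ with the generating-function sum is formally justified for $|z|<1$ by absolute convergence of the exponential series, but a word acknowledging this would tighten the argument. Your adjointness remark at the end is a nice observation and is exactly in the spirit of how the paper later uses Lemma~\ref{intbyparts}.
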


We will make use of the fact that our trial basis functions are the eigenfunctions of the FSLP-1 
in Section 3.4 below, where we discuss the rate of decay of the coefficients of our Galerkin expansion.

%%%%%%%%%%%%%%%%%%%%%%%%%%%%%%%%%%%%%%%%%%%%%%%%%%

\subsection{Useful properties of Laguerre polynomials}
The left- and right-sided Riemann-Liouville derivatives of the generalized associated Laguerre 
functions (GALFs) are given by (from \cite{FSL})
\begin{align}
	{}_0\mcD_t^{\nu} \phi_m^{\alpha_1,1}(t) &= \frac{\Gamma(m+\alpha_1)}{\Gamma(m+\alpha_1-\nu)} t^{\alpha_1 - \nu} L_{m-1}^{(\alpha_1-\nu)}(t) = \frac{\Gamma(m+\alpha_1)}{\Gamma(m+\alpha_1-\nu)} \phi_m^{\alpha_1-\nu,1}(t), \\
	{}_t\mcD_\infty^\nu \phi_k^{\alpha_2,2}(t) &= e^{-t} L_{k-1}^{(\nu + \alpha_2)}(t) = \phi_k^{\nu + \alpha_2,2}(t),
\end{align}
where $\nu > 0$ and $\alpha_1, \alpha_2 > -1.$ 
\begin{lemma}
The GALFs satisfy the following orthogonality property.
\begin{align}
	\int_0^\infty \phi_n^{\beta,1}(t) \phi_k^{\beta,2}(t) \ dt &= \int_0^\infty t^\beta e^{-t} L_n^{(\beta)}(t) L_k^{(\beta)}(t) \ dt = \gamma_n^\beta \delta_{kn}, \\
\label{gamma}
	\gamma_n^\beta &:= \frac{\Gamma(n+\beta+1)}{\Gamma(n+1)}.
\end{align}
\label{orthogonal}
\end{lemma}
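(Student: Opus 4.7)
The plan is to reduce the claimed identity to the classical orthogonality relation for the generalized Laguerre polynomials. First, I would substitute the defining expressions $\phi_n^{\beta,1}(t) = t^\beta L_n^{(\beta)}(t)$ and $\phi_k^{\beta,2}(t) = e^{-t} L_k^{(\beta)}(t)$ from the two preceding FSLP theorems directly into the integral on the left. This substitution is immediate and produces
\begin{equation*}
\int_0^\infty \phi_n^{\beta,1}(t)\,\phi_k^{\beta,2}(t)\,dt \;=\; \int_0^\infty t^\beta e^{-t} L_n^{(\beta)}(t) L_k^{(\beta)}(t)\,dt,
\end{equation*}
which is exactly the first equality of the lemma. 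Note that the factors $t^\beta$ and $e^{-t}$ coming from the two different eigenfunctions combine to form precisely the standard Laguerre weight $t^\beta e^{-t}$ on $(0,\infty)$, which is the key observation making this lemma work.

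Second, I would invoke the classical orthogonality of the generalized Laguerre polynomials with respect to the weight $t^\beta e^{-t}$ on $(0,\infty)$, valid for $\beta > -1$, namely
\begin{equation*}
\int_0^\infty t^\beta e^{-t} L_n^{(\beta)}(t) L_k^{(\beta)}(t)\,dt \;=\; \frac{\Gamma(n+\beta+1)}{n!}\,\delta_{kn},
\end{equation*}
and observe that $\Gamma(n+1)=n!$ so the right-hand side equals $\gamma_n^\beta \delta_{kn}$ as defined in \eqref{gamma}. This completes the identification.

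If a self-contained derivation of the classical relation is desired rather than simply citing it, the standard route is to use the Rodrigues formula
\begin{equation*}
L_n^{(\beta)}(t) \;=\; \frac{t^{-\beta}e^{t}}{n!}\frac{d^n}{dt^n}\!\left(t^{n+\beta}e^{-t}\right),
\end{equation*}
substitute this for one of the two Laguerre factors, and integrate by parts $n$ times. The boundary terms vanish by the exponential decay at $\infty$ and by the $t^\beta$ prefactor at $0$ (since $\beta>-1$), and when $k<n$ one of the derivatives annihilates the polynomial $L_k^{(\beta)}$; when $k=n$ the surviving term is $(-1)^n n!$ times the leading coefficient, leaving $\int_0^\infty t^{n+\beta}e^{-t}\,dt = \Gamma(n+\beta+1)$, divided by $n!$.

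I do not anticipate a genuine obstacle: the lemma is essentially a repackaging of a well-known classical identity, and the only new content is the observation that the two distinct GALF bases $\{\phi_n^{\beta,1}\}$ and $\{\phi_k^{\beta,2}\}$ form a biorthogonal pair in the unweighted $L^2(0,\infty)$ inner product. This biorthogonality, rather than the elementary derivation itself, is the real reason the lemma is stated, since it is exactly what one needs in order to extract Petrov-Galerkin coefficients by plain $L^2$ pairing later in the paper.
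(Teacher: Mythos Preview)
Your argument is correct. The paper does not actually supply a proof of this lemma; it is stated as a known property of the GALFs (essentially the classical Laguerre orthogonality rewritten through the biorthogonal pair $\phi_n^{\beta,1}$, $\phi_k^{\beta,2}$), and your reduction to the standard weighted orthogonality relation is exactly the intended justification.
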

\noindent Notice that when $\beta = 0,$ the resulting matrix is the identity.

%%%%%%%%%%%%%%%%%%%%%%%%%%%%%%%%%%%%%%%%%%%%%%%%%%

\subsection{Fractional integration by parts}

In order to develop the Petrov-Galerkin method, we will need to employ fractional 
integration by parts on the half line involving the GALFs. We will prove Lemma \ref{intbyparts} 
following the technique presented in \cite{Li2009SpaceTime}.
\begin{lemma}
\label{intbyparts}
For real $\nu,$ $0 < \nu < 1,$ if $\Omega := (0,+\infty)$, $\phi_n^{\alpha,1}(t)$ is the GALF of 
the first kind, and $\phi_k^{\beta,2}(t)$ is the GALF of the second kind, and 
$\alpha, \beta > -1,$ then
\begin{align}
	\left({}_0\mcD_t^\nu \phi_n^{\alpha,1}(t), \phi_k^{\beta,2}(t)\right)_\Omega &= \left(\phi_n^{\alpha,1}(t), {}_t\mcD_\infty^\nu \phi_k^{\beta,2}(t)\right)_\Omega.
\end{align}
\end{lemma}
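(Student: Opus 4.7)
The plan is to follow the technique of Li and Xu \cite{Li2009SpaceTime} applied to the half line, exploiting the special structure of the GALFs. First I will rewrite the left-sided Riemann-Liouville derivative in its ``derivative of integral'' form
$$
{}_0\mcD_t^\nu \phi_n^{\alpha,1}(t) = \frac{d}{dt}\,{}_0\mcI_t^{1-\nu}\phi_n^{\alpha,1}(t),
$$
substitute into the inner product $({}_0\mcD_t^\nu \phi_n^{\alpha,1},\phi_k^{\beta,2})_\Omega$, and apply ordinary integration by parts in $t$ to transfer the outer $d/dt$ onto $\phi_k^{\beta,2}$. This produces a surface term plus an integral involving ${}_0\mcI_t^{1-\nu}\phi_n^{\alpha,1}$ against $(\phi_k^{\beta,2})'$.

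Next I will check that the boundary contribution $[{}_0\mcI_t^{1-\nu}\phi_n^{\alpha,1}(t)\,\phi_k^{\beta,2}(t)]_0^\infty$ vanishes. The derivative rule quoted in Section~2.3 has an integral analogue, so ${}_0\mcI_t^{1-\nu}\phi_n^{\alpha,1}(t)$ is a constant multiple of $t^{\alpha+1-\nu}L_n^{(\alpha+1-\nu)}(t)$. Combined with the exponential factor $e^{-t}$ in $\phi_k^{\beta,2}$, this kills the contribution at $t=+\infty$, while the behavior of $t^{\alpha+1-\nu}$ at the origin (together with $\phi_k^{\beta,2}$ being bounded there) kills the contribution at $t=0^+$.

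With the endpoint terms gone, what remains is $-\int_0^\infty {}_0\mcI_t^{1-\nu}\phi_n^{\alpha,1}(t)\,(\phi_k^{\beta,2})'(t)\,dt$. I will then invoke the Fubini-type adjoint identity
$$
\int_0^\infty {}_0\mcI_t^\mu f(t)\,g(t)\,dt \;=\; \int_0^\infty f(t)\,{}_t\mcI_\infty^\mu g(t)\,dt,
$$
which is immediate upon writing both sides as a double integral over $\{0 < s < t < \infty\}$ and swapping the order. Finally, the substitution $u=s-t$ in the defining integral of ${}_t\mcI_\infty^{1-\nu}$ shows that $\tfrac{d}{dt}\,{}_t\mcI_\infty^{1-\nu}g(t) = {}_t\mcI_\infty^{1-\nu} g'(t)$, and combining with ${}_t\mcD_\infty^\nu g(t) = -\tfrac{d}{dt}\,{}_t\mcI_\infty^{1-\nu}g(t)$ yields ${}_t\mcI_\infty^{1-\nu}(\phi_k^{\beta,2})'(t) = -\,{}_t\mcD_\infty^\nu \phi_k^{\beta,2}(t)$. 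Chaining the equalities produces the stated identity.

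The hard part will be the bookkeeping at the endpoints and the justification of interchanging integration orders. Vanishing at infinity is comfortable thanks to the $e^{-t}$ decay of the second-kind GALF and its derivative, but the behavior at $t=0$ requires using the explicit $t^{\alpha+1-\nu}$ factor from the fractional integral of $\phi_n^{\alpha,1}$; I will use this together with Fubini's theorem (the integrand is a product of power-type and exponentially-decaying factors, hence absolutely integrable on $\{0<s<t<\infty\}$) to validate the adjoint step cleanly.
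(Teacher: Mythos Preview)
Your proposal is correct and follows essentially the same route as the paper's proof: both write ${}_0\mcD_t^\nu\phi_n^{\alpha,1}=\tfrac{d}{dt}\,{}_0\mcI_t^{1-\nu}\phi_n^{\alpha,1}$, integrate by parts in $t$ to push the outer derivative onto $\phi_k^{\beta,2}$, discard the boundary term, swap the order of integration (Fubini), and then identify the resulting expression with ${}_t\mcD_\infty^\nu\phi_k^{\beta,2}$ via the commutation relation between $\tfrac{d}{dt}$ and ${}_t\mcI_\infty^{1-\nu}$. The only cosmetic differences are that the paper verifies this last commutation identity (your Step~5) by an integration by parts in the inner variable rather than by the substitution $u=s-t$, and that you justify the vanishing boundary term via the explicit form ${}_0\mcI_t^{1-\nu}\phi_n^{\alpha,1}\propto t^{\alpha+1-\nu}L_{n-1}^{(\alpha+1-\nu)}(t)$, which is a little more explicit than the paper's treatment; note the Laguerre index should be $n-1$, not $n$, to match the paper's convention $\phi_n^{\alpha,1}(t)=t^\alpha L_{n-1}^{(\alpha)}(t)$.
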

\begin{proof}
Using integration by parts,
\begin{align}
\label{star1}
\begin{split}
	\left({}_0\mcD_t^\nu \phi_n^{\alpha,1}(t), \phi_k^{\beta,2}(t)\right)_\Omega &= \int_0^\infty {}_0\mcD_t^\nu \left\{ t^\alpha L_{n-1}^{(\alpha)}(t) \right\} e^{-t}L_{k-1}^{(\beta)}(t) \ dt \\
	&= \int_0^\infty \frac{1}{\Gamma(1-\nu)} \frac{d}{dt} \int_0^t \frac{s^\alpha L_{n-1}^{(\alpha)}(s)}{(t-s)^\nu} \ ds \ e^{-t} L_{k-1}^{(\beta)}(t) \ dt \\
	&= \frac{e^{-t}L_{k-1}^{(\beta)}(t)}{\Gamma(1-\nu)} \int_0^t \frac{s^\alpha L_{n-1}^{(\alpha)}(s)}{(t-s)^\nu} \ ds \Bigg|_0^\infty - \\
	&\hspace{20pt} \frac{1}{\Gamma(1-\nu)}\int_0^\infty \int_0^t \frac{s^\alpha L_n^{(\alpha)}(s)}{(t-s)^\nu} \ ds \ \frac{d}{dt} \left\{ e^{-t} L_{k-1}^{(\beta)}(t)\right\} \ dt \\
	&= -\frac{1}{\Gamma(1-\nu)} \int_0^\infty \int_0^t \frac{s^\alpha L_{n-1}^{(\alpha)}(s)}{(t-s)^\nu} \ ds \ \frac{d}{dt} \left\{ e^{-t} L_{k-1}^{(\beta)}(t)\right\} \ dt
\end{split}
\end{align}
Now we use integration by parts again.
\begin{align}
\label{star2}
\begin{split}
	\frac{d}{dt} \int_t^\infty \frac{e^{-s} L_{k-1}^{(\beta)}(s)}{(s-t)^\nu} \ ds &= \frac{d}{dt} \left[ \frac{e^{-s}L_{k-1}^{(\beta)}(s)(s-t)^{1-\nu}}{1-\nu}\Bigg|_t^\infty - \frac{1}{1-\nu}\int_t^\infty \frac{d}{ds} \left\{ e^{-s} L_{k-1}^{(\beta)}(s) \right\} (s-t)^{1-\nu} \ ds \right] \\
	&= -\frac{1}{1-\nu}\frac{d}{dt}\int_t^\infty \frac{d}{ds}\left\{e^{-s} L_{k-1}^{(\beta)}(s)\right\} (s-t)^{1-\nu} \ ds \\
	&= \int_t^\infty \frac{\frac{d}{ds} \left\{e^{-s} L_{k-1}^{(\beta)}(s)\right\}}{(s-t)^\nu} \ ds.
\end{split}
\end{align}
Using \eqref{star2}, the right hand side of \eqref{star1} can be written as
\begin{align}
\label{star3}
\begin{split}
	-\frac{1}{\Gamma(1-\nu)} \int_0^\infty & \int_0^t \frac{s^\alpha L_{n-1}^{(\alpha)}(s)}{(t-s)^\nu} \ ds \ \frac{d}{dt} \left\{ e^{-t} L_{k-1}^{(\beta)}(t)\right\} \ dt  \\
	&= -\frac{1}{\Gamma(1-\nu)} \int_0^\infty \int_t^\infty \frac{\frac{d}{ds} \left\{ e^{-s} L_{k-1}^{(\beta)}(s)\right\}}{(s-t)^\nu} \ ds \ t^\alpha L_{n-1}^{(\alpha)}(t) \ dt \\
	&= -\frac{1}{\Gamma(1-\nu)} \int_0^\infty \left(\frac{d}{dt} \int_t^\infty \frac{e^{-s}L_{k-1}^{(\beta)}(s)}{(s-t)^\nu} \ ds \right) t^\alpha L_{n-1}^{(\alpha)}(t) \ dt \hspace{40pt} \text{(by \eqref{star2})} \\
	&= \left(t^\alpha L_{n-1}^{(\alpha)}(t), {}_t\mcD_\infty^\nu \left\{ e^{-t} L_{k-1}^{(\beta)}(t)\right\} \right)_\Omega \\
	&= \left( \phi_n^{\alpha,1}(t), {}_t\mcD_\infty^\nu \phi_k^{\beta,2}(t)\right)_\Omega.
\end{split}
\end{align}
The combination of \eqref{star1} and \eqref{star3} gives the desired result.
\end{proof}

Using the property of Riemann-Liouville fractional derivatives from \cite{podlubny} that if 
$0<p<1,$ $0<q<1,$ $v(0)=0,$ and $t>0,$
\begin{align}
	{}_0\mcD_t^{p+q} v(t) &= {}_0\mcD_t^p \ {}_0\mcD_t^q v(t) = {}_0\mcD_t^q \ {}_0\mcD_t^p v(t),
\end{align}
we can infer from Lemma \ref{intbyparts} that
\begin{align}
\label{intbyparts2}
	\left({}_0\mcD_t^{p+q} \phi_n^{\alpha,1}(t),\phi_k^{\beta,2}(t)\right)_\Omega &= \left({}_0\mcD_t^p \phi_n^{\alpha_1}(t), {}_t\mcD_\infty^q \phi_k^{\beta,2}(t)\right)_\Omega.
\end{align}
We will use property \eqref{intbyparts2} in the variational form for the derivation of our 
Petrov-Galerkin method in the following section.

%%%%%%%%%%%%%%%%%%%%%%%%%%%%%%%%%%%%%%%%%%%%%%%%%%

\section{Petrov-Galerkin spectral method}

As an example problem, we consider the case $K = 2$, with $b_1 = b_2 = 1$:
\begin{align}
\begin{split}
	{}_0\mcD_t^{\nu_1} u(t) + {}_0\mcD_t^{\nu_2} u(t) &= f(t), \hspace{15pt} t \in (0,+\infty), \\
	u(0) &= 0,
\end{split}
\end{align}
where $\nu_1,\nu_2 \in (0,1).$

We use the generalized Laguerre functions to approximate the solution:
\begin{align}
\label{expansion}
	u(t) \approx u_N(t) &= \sum_{n=1}^N a_n \phi_n^{\alpha_1,1}(t),
\end{align}
with $\{a_n\}_{n=1}^N$ the unknown coefficients. The trial and test functions are defined as the eigenfunctions 
of the singular Sturm-Liouville problems of the first and second kinds, respectively:
\begin{align}
	\phi_n^{\alpha_1,1}(t) &= t^{\alpha_1} L_{n-1}^{(\alpha_1)}(t), \\
	\phi_k^{\alpha_2,2}(t) &= e^{-t}L_{k-1}^{(\alpha_2)}(t).
\end{align}

Then the variational form for the PG spectral method is
\begin{align}
\begin{split}
	\sum_{n=1}^N a_n \int_0^\infty \phi_k^{\alpha_2,2}(t) {}_0\mcD_t^{\nu_1} \phi_n^{\alpha_1,1}(t) \ dt + \sum_{n=1}^N a_n \int_0^\infty \phi_k^{\alpha_2,2}(t) {}_0\mcD_t^{\nu_2} \phi_n^{\alpha_1,1}(t) \ dt \\
	= \int_0^\infty f(t) \phi_k^{\alpha_2,2}(t) \ dt =: \hat{f}_k.
\end{split}
\end{align}
Next, we apply Lemma \ref{intbyparts} to the variational form:
\begin{align}
\begin{split}
	\sum_{n=1}^N a_n \int_0^\infty {}_0\mcD_t^{\alpha_1} \phi_n^{\alpha_1,1}(t) {}_t\mcD_\infty^{\nu_1-\alpha_1} \phi_k^{\alpha_2,2}(t) \ dt + \hspace{4cm}\\
	+ \sum_{n=1}^N a_n \int_0^\infty {}_0\mcD_t^{\alpha_1} \phi_n^{\alpha_1,1}(t) {}_t\mcD_\infty^{\nu_2-\alpha_1}\phi_k^{\alpha_2,2}(t) \ dt = \hat{f}_k,
\end{split}
\end{align}
where we keep the left-sided derivative of order $\alpha_1$ applied to the trial basis functions and transfer 
the rest of the derivative to the test functions. We tune $\alpha_1$ to optimize the convergence of the 
spectral method, and $\alpha_2$ is determined by the relation $\alpha_2 = \alpha_1 - \nu_1.$

Using the parameters defined above and Lemma \ref{orthogonal}, the variational form reduces to:
\begin{align}
\begin{split}
	\sum_{n=1}^N a_n & \int_0^\infty {}_0\mcD_t^{\alpha_1} \phi_n^{\alpha_1,1}(t) {}_t\mcD_\infty^{\nu_1 - \alpha_1} \phi_k^{\alpha_1-\nu_1,2}(t) \ dt + \\
	&+ \sum_{n=1}^N a_n \int_0^\infty {}_0\mcD_t^{\alpha_1} \phi_n^{\alpha_1,1}(t) {}_t\mcD_\infty^{\nu_2 - \alpha_1} \phi_k^{\alpha_1-\nu_1,2}(t) \ dt \\
	= \sum_{n=1}^N a_n &\int_0^\infty \frac{\Gamma(n+\alpha_1)}{\Gamma(n)} \phi_n^{0,1}(t) \phi_k^{0,2}(t) \ dt + \\
	&+ \sum_{n=1}^N a_n \int_0^\infty \frac{\Gamma(n+\alpha_1)}{\Gamma(n)} \phi_n^{0,1}(t) \phi_k^{\alpha_1-\nu_1 + \nu_2 - \alpha_1,2}(t) \ dt \\
	= \sum_{n=1}^N a_n & \frac{\Gamma(n+\alpha_1)}{\Gamma(n)} \left[ \delta_{kn} + \int_0^\infty \phi_n^{0,1}(t) \phi_k^{\nu_2 - \nu_1,2}(t) \ dt \right]\\
	= \sum_{n=1}^N a_n & \frac{\Gamma(n+\alpha_1)}{\Gamma(n)} \left[ \delta_{kn} + \int_0^\infty e^{-t} L_{n-1}(t) L_{k-1}^{(\nu_2 - \nu_1)}(t) \ dt\right].
\end{split}
\end{align}

Then it remains to solve the linear system
\begin{align}
	S\vec{a} = \vec{\hat{f}},
\end{align}
where the coefficient matrix $S$ is defined
\begin{align}
\label{def:stiff}
	S_{kn} &= \frac{\Gamma(n+\alpha_1)}{\Gamma(n)}\left[\delta_{kn} + \int_0^\infty \phi_n^{0,1}(t) \phi_k^{\nu_2-\nu_1,2}(t) \ dt \right],
\end{align}
and $\hat{f}_k$ is defined by the integral
\begin{align}
	\hat{f}_k := \int_0^\infty f(t) \phi_k^{\alpha_2,2}(t) \ dt &= \int_0^\infty f(t) e^{-t} L_{k-1}^{(\alpha_2)}(t) \ dt.
\end{align}
We compute this integral using Gauss-Laguerre quadrature.

%%%%%%%%%%%%%%%%%%%%%%%%%%%%%%%%%%%%%%%%%%%%%%%%%%

\subsection{Factorization of the linear system}

The integral in \eqref{def:stiff} has the form
\begin{align}
	Q_{kn} := \int_0^\infty \phi_n^{0,1}(t) \phi_k^{\nu_2-\nu_1,2}(t) \ dt &= \int_0^\infty e^{-t}L_{n-1}(t) L_{k-1}^{(\nu_2-\nu_1)}(t) \ dt.
\end{align}
The matrix $Q$ is a lower-triangular Toeplitz matrix, i.e.
\begin{align}
	Q &= \begin{bmatrix} q_1 & 0 & 0 & \cdots & 0 \\ q_2 & q_1 & 0 & \cdots & 0 \\ q_3 & q_2 & q_1 & \cdots & 0 \\ \vdots & \ddots & \ddots & \ddots & \vdots \\ q_N & q_{N-1} & q_{N-2} & \cdots & q_1 \end{bmatrix}
\end{align}
where the entries are given by the formula
\begin{align}
\label{stiff-formula}
	q_{k-n+1} &= \prod_{i=1}^{k-n} \frac{\nu_2 - \nu_1 + i - 1}{i},
\end{align}
with $k$ as the row index and $n$ as the column index of $Q.$ We can use the 
formula \eqref{stiff-formula} to assemble the stiffness matrix with explicit expressions for each 
entry instead of using quadrature. This will offer significant savings in the cost of assembling the 
stiffness matrix as well as eliminate any approximation error for these entries. The Toeplitz 
structure offers additional savings in storage and makes the process of $p$-refinement efficient 
since we can store the values of the stiffness matrix from the previous approximation. 
Indeed, going from the $N^{\text{th}}$ order expansion to the $(N+1)^{\text{th}}$ 
requires that we add one row and one column to $Q$ (hence $S$), but as this matrix will also 
be Toeplitz and the entries $q_m$ of $Q$ only depend on the orders of the fractional derivatives 
and the number of their diagonal $(m)$, the only new entry that we will need to compute is 
$q_{N+1}$.

We can derive formula \eqref{stiff-formula} using the recurrence identity
\begin{align}
\label{recurrence}
	L_n^{(\alpha)}(t) &= \sum_{i=0}^n \binom{\alpha-\beta + n - i -1}{n-i} L_i^{(\beta)}(t).
\end{align}

Consider again the matrix entry $Q_{kn}$:
\begin{align}
	Q_{kn} &= \int_0^\infty e^{-t} L_{n-1}(t) L_{k-1}^{(\nu_2 - \nu_1)}(t) \ dt.
\end{align}
We plug in the recurrence identity \eqref{recurrence} to expand the Laguerre polynomial 
$L_{k-1}^{(\nu_2-\nu_1)}(t)$ in terms of standard Laguerre polynomials:
\begin{align}
	Q_{kn} &= \int_0^\infty e^{-t} L_{n-1}(t) \sum_{i=0}^{k-1}\binom{\nu_2 - \nu_1 + k-i-2}{k-i-1} L_i(t) \ dt \\
	&= \sum_{i=0}^{k-1} \binom{\nu_2 - \nu_1 + k-i-2}{k-i-1} \int_0^\infty e^{-t} L_{n-1}(t) L_i(t) \ dt \\
	&= \sum_{i=1}^{k} \binom{\nu_2 - \nu_1 + k - i - 1}{k-i} \int_0^\infty e^{-t} L_{n-1}(t) L_{i-1}(t) \ dt \\
	&= \sum_{i=1}^k \binom{\nu_2 - \nu_1 + k - i - 1}{k-i} \delta_{ni} \\
	&= \begin{cases} \binom{\nu_2 - \nu_1 + k - n -1}{k-n}, & n \leq k \\ 0, & n > k. \end{cases}
\end{align}
This implies that $Q$ is lower triangular. Using the product formula to compute the binomial coefficient, we find that
\begin{align}
	Q_{kn} &= \begin{cases} \frac{1}{(k-n)!} \Pi_{\ell=1}^{k-n} (\nu_2 - \nu_1 + \ell - 1), & k \geq n, \\
	0, & k < n. \end{cases}
\end{align}
Hence we can construct the coefficient matrix $S$ exactly, and we will show that $S$ is a lower-triangular 
matrix which can be factored in a way that reduces the complexity of solving the linear system to 
$\mcO(N \log N)$ operations.

To demonstrate how this is done, we define the matrix $\tilde{S}$ by
\begin{align}
	\tilde{S}_{kn} &=\delta_{kn}+ \int_0^\infty e^{-t} L_{n-1}(t) L_{k-1}^{(\nu_2-\nu_1)}(t) \ dt.
\end{align}

Then
\begin{align}
	S_{kn} &= \frac{\Gamma(n+\alpha_1)}{\Gamma(n)} \tilde{S}_{kn},
\end{align}
where $n$ is the column index. If $\{s_n\}_{n=1}^N$ are the column vectors of $S,$  and $\{\tilde{s}_n\}_{n=1}^N$ 
are the column vectors of $\tilde{S},$ then for the solution vector $a = [a_1 \ a_2 \ \cdots \ a_N],$ 
we have
\begin{align}
\begin{split}
	Sa &= \begin{bmatrix} \ \\ \ \\ s_1 \\ \ \\ \ \end{bmatrix} a_1 + \begin{bmatrix} \ \\ \ \\ s_2 \\ \ \\ \ \end{bmatrix}  a_2 + \cdots + \begin{bmatrix} \ \\ \ \\ s_N \\ \ \\ \ \end{bmatrix} a_N \\
	&= \begin{bmatrix} \ \\ \ \\ \tilde{s}_1 \\ \ \\ \ \end{bmatrix} \frac{\Gamma(1+\alpha_1)}{\Gamma(1)} a_1 + \cdots + \begin{bmatrix} \ \\ \ \\ \tilde{s}_N \\ \ \\ \ \end{bmatrix} \frac{\Gamma(N+\alpha_1)}{\Gamma(N)}a_N \\
	&= \begin{bmatrix} \ \\ \ \\ \tilde{s}_1 \\ \ \\ \ \end{bmatrix} \tilde{a}_1 + \cdots + \begin{bmatrix} \ \\ \ \\ \tilde{s}_N \\ \ \\ \ \end{bmatrix} \tilde{s}_N \\
	&= \tilde{S}\tilde{a} = \hat{f},
\end{split}
\end{align}
where $\hat{f}$ is known, $\tilde{S}$ is a lower-triangular Toeplitz matrix, and
\begin{align}
	\tilde{a}_k &:= \frac{\Gamma(k+\alpha_1)}{\Gamma(k)} a_k.
\end{align}
This procedure is equivalent to factoring the stiffness matrix into a Toeplitz matrix $\tilde{S}$ 
and a diagonal matrix $D$, resulting in the linear system with the form
\begin{align}
	Sa = \tilde{S} \tilde{a} = \tilde{S} Da = \hat{f},
\end{align}
where
\begin{align}
	\tilde{S} &= \begin{bmatrix} q_1 + 1 & 0 & 0 & \cdots & 0 \\ q_2 & q_1 + 1 & 0 & \cdots & 0 \\ q_3 & q_2 & q_1 + 1 & \cdots & 0 \\ \vdots & \ddots & \ddots & \ddots & \vdots \\ q_N & q_{N-1} & q_{N-2} & \cdots & q_1 + 1 \end{bmatrix}, \\
	\label{diagonal}
	 \tilde{a}  = Da &= \begin{bmatrix} \frac{\Gamma(1+\alpha_1)}{\Gamma(1)} & 0 & 0 & \cdots & 0 \\ 0 & \frac{\Gamma(2+\alpha_1)}{\Gamma(2)} & 0 & \cdots & 0 \\ 0 & 0 & \frac{\Gamma(3+\alpha_1)}{\Gamma(3)} & \cdots & 0 \\ \vdots & \ddots & \ddots & \ddots & \vdots  \\ 0 & 0 & 0 & \cdots & \frac{\Gamma(N + \alpha_1)}{\Gamma(N)} \end{bmatrix}  \begin{bmatrix} a_1 \\ a_2 \\ a_3 \\ \vdots \\ a_N \end{bmatrix}
\end{align}

Hence we can solve for $\tilde{a}$ in $\mcO(N \log N)$ operations using the algorithm in \cite{toeplitz} and compute $a$ from $\tilde{a}$ in 
another $\mcO(N)$ operations. Since the matrix $D$ only depends on parameter $\alpha_1,$ which 
comes from the approximation $u_N$ itself, the stiffness matrix will have this structure in the case 
where the number of terms in the FIVP, $K$, is greater than 2. In fact, we can solve multi-term FIVPs 
with any number of terms with $\mcO(N \log N)$ operations, as discussed in Section 3.3 below.

It is interesting to note here that mass matrices will have a similar form using this approximation 
method, i.e. when $\nu_i = 0$ for some $i \leq K.$

%%%%%%%%%%%%%%%%%%%%%%%%%%%%%%%%%%%%%%%%%%%%%%%%%%

\subsection{Arbitrary number of terms in the FIVP}

In the above example, we have assumed that $K=2,$ i.e.,
\begin{align*}
	\sum_{i=1}^K b_i \ {}_0\mcD_t^{\nu_i} u(t) &= b_1 {}_0\mcD_t^{\nu_1} u(t) + b_2 {}_0\mcD_t^{\nu_2} u(t) = f(t).
\end{align*}
The next natural question is whether we achieve a similar structure of the stiffness matrix if the 
number of terms on the left hand side, $K$, is greater than 2. If we follow the same derivation of the 
stiffness matrix as above in the case where $K=3$ with $b_1 = b_2 = b_3 = 1,$ for example, we 
find that
\begin{align}
	S_{nk} &= \frac{\Gamma(n+\alpha_1)}{\Gamma(n)} \left[ \delta_{nk} + \int_0^\infty \phi_n^{0,1}(t) \phi_k^{\nu_2-\nu_1,2}(t) \ dt + \int_0^\infty \phi_n^{0,1}(t) \phi_k^{\nu_3-\nu_1,2}(t) \ dt\right]
\end{align}

Then we define matrices $Q_1$ and $Q_2$ as
\begin{align}
\begin{split}
	(Q_1)_{kn} &:= \int_0^\infty \phi_n^{0,1}(t) \phi_k^{\nu_2-\nu_1,2}(t) \ dt = \int_0^\infty e^{-t} L_{n-1}(t) L_{k-1}^{(\nu_2-\nu_1)}(t) \ dt \\
	&=\begin{cases}  \frac{1}{(k-n)!} \prod_{\ell=1}^{k-n}(\nu_2 - \nu_1 + \ell - 1), & k \geq n \\ 0, & k < n, \end{cases} \\
	(Q_2)_{kn} &:= \int_0^\infty \phi_n^{0,1}(t) \phi_k^{\nu_3-\nu_1,2}(t) \ dt = \int_0^\infty e^{-t}L_{n-1}(t) L_{k-1}^{(\nu_3-\nu_1)}(t) \ dt \\
	&= \begin{cases} \frac{1}{(k-n)!} \prod_{\ell=1}^{k-n}(\nu_3 - \nu_1 + \ell - 1), & k \geq n \\ 0, & k < n.\end{cases}
\end{split}
\end{align}
If we represent the diagonal entries of $Q_1$ and $Q_2$ by $q_m^{(1)}$ 
and $q_m^{(2)},$ respectively, with $m = k-n+1$, the resulting stiffness matrix is
\begin{align}
\begin{split}
	S_{kn} &= \frac{\Gamma(n+\alpha_1)}{\Gamma(n)} \tilde{S}_{kn}, \\
	\tilde{S}_{kn} &= \begin{bmatrix} q_1^{(1)} + q_1^{(2)} + 1 & 0 & 0 & \cdots & 0 \\ q_2^{(1)} + q_2^{(2)} & q_1^{(1)} + q_1^{(2)} + 1 & 0 & \cdots & 0 \\ q_3^{(1)} + q_3^{(2)} & q_2^{(1)} + q_2^{(2)} & q_1^{(1)} + q_1^{(2)} + 1 & \cdots & 0 \\ \vdots & \ddots & \ddots & \ddots & \vdots \\ q_N^{(1)} + q_N^{(2)} & q_{N-1}^{(1)} + q_{N-1}^{(2)} & \cdots & \cdots & q_1^{(1)} + q_1^{(2)} + 1 \end{bmatrix}
\end{split}
\end{align}
Hence $S$ can again be factored into $\tilde{S}D$ with $D$ defined as in \eqref{diagonal}.
Then we define $\tilde{a}_k := \frac{\Gamma(k+\alpha_1)}{\Gamma(k)} a_k$ in the same way as 
before, and follow the same procedure as in the $K=2$ case to invert $\tilde{S}$ and $D$. We continue in this way for any value of $K \in \mbbN$ to 
see that we can solve the resulting linear system for any number of terms using $\mcO(N \log N)$ operations.

%%%%%%%%%%%%%%%%%%%%%%%%%%%%%%%%%%%%%%%%%%%%%%%%%%

\subsection{Spectral decay of coefficients in Galerkin projection}

In this section, we are mainly interested in the rate of decay of the coefficients of the Galerkin expansion. Given 
the weight function $w(t) = t^{-\beta}e^{-t},$ we expand a function $u(t) \in L^2_w(0,\infty)$ by
\begin{align}
	u(t) &\approx u_N(t) := \sum_{n=1}^N a_n t^\beta L_{n-1}^\beta(t).
\end{align}
Then following \cite{shen}, since $t^\beta L_{n-1}^\beta(t)$ is an eigenfunction for the FSLP-1, 
we have from \eqref{gamma} and Lemma \ref{intbyparts},
\begin{align}
	\|u\|_{L^2_w}^2 &= \sum_{n=1}^N \gamma_n^\beta |a_n|^2.
\end{align}
Therefore
\begin{align}
\begin{split}
	a_n &= \frac{1}{\gamma_n^\beta}(u, \phi_n^{\beta,1})_{L^2_w} \\
	&= \frac{1}{\gamma_n^\beta} \int_0^\infty u(t) \phi_n^{\beta,1}(t) w(t) \ dt \\
	&= \frac{1}{\gamma_n^\beta \lambda_n^1} \int_0^\infty u(t) \mcL_1^{\alpha,\beta}[\phi_n^{\beta,1}(t)] \ dt \\
	&= \frac{1}{\gamma_n^\beta \lambda_n^1} \int_0^\infty \left({}_0\mcD_t^\alpha u(t)\right) e^{-t} t^{\alpha-\beta} {}_0\mcD_t^\alpha \phi_n^{\beta,1}(t) \ dt \\
	&= \frac{1}{\gamma_n^\beta \lambda_n^1} \int_0^\infty \mcL_1^{\alpha,\beta}[u(t)]\phi_n^{\beta,1}(t) \ dt \\
	&= \frac{1}{\gamma_n^\beta \lambda_n^1}(u_{(1)},\phi_n^{\beta,1})_{L^2_w}.
\end{split}
\end{align}
We have defined $u_{(m)}$ as in \cite{hesthaven}
\begin{align}
	u_{(m)}(t) &= \frac{1}{w(t)} \mcL u_{(m-1)}(t) = \left(\frac{\mcL}{w(t)}\right)^m u(t).
\end{align}

Then 
\begin{align}
\begin{split}
	\frac{1}{\gamma_n^\beta \lambda_n^1} (u_{(1)},\phi_n^{\beta,1})_{L^2_w} &= \frac{1}{\gamma_n^\beta (\lambda_n^1)^2} (u_{(2)}, \phi_n^{\beta,1})_{L^2_w} \\
	&= \cdots\\
	&= \frac{1}{\gamma_n^\beta (\lambda_n^1)^m} (u_{(m)},\phi_n^{\beta,1})_{L^2_w}.
\end{split}
\end{align}

We know from \cite{FSL} that the eigenvalues have the asymptotic 
similarity
\begin{align}
	\lambda_n \sim n^\alpha.
\end{align}

So the coefficients of the approximation decay at the rate:
\begin{align}
	|a_n| &\simeq C\frac{1}{(\lambda_n^1)^m} \|u_{(m)}\|_{L^2_w} \sim C n^{-\alpha m} \|u_{(m)}\|_{L^2_w}.
\end{align}

If $u \in C^\infty(0,\infty),$ we expect exponential convergence of the approximation.

%%%%%%%%%%%%%%%%%%%%%%%%%%%%%%%%%%%%%%%%%%%%%%%%%%

\section{Numerical Results}

In this section, we present numerical examples which demonstrate the validity of our proposed 
method. We plot relative errors computed using Gauss Laguerre quadrature 
for various values of $N,$ which represents the number of terms in the Galerkin expansion. The 
formula for the relative errors, represented by $e_N,$ is given by
\begin{align}
	e_N &= \frac{\|u^{\text{ext}} - u_N\|_{\omega,L^2(0,\infty)}}{\|u^{\text{ext}}\|_{\omega,L^2(0,\infty)}},
\end{align}
where the weight function is $\omega(t) = e^{-t}.$

\subsection{Example 1.}

In this example, we solve the multi-term FIVP
\begin{align}
\begin{split}
	{}_0\mcD_t^{1/3} u(t) + {}_0\mcD_t^{1/2} u(t) &= f(t), \\
	u(0) &= 0.
\end{split}
\end{align}
We test the method using the fabricated solution $u^{\text{ext}}(t) = t^{3+1/4}.$

In Figure \ref{example1}, we plot the numerical solutions using seven different values of the 
tuning parameter $\alpha_1.$ Recall that the basis functions used in the Galerkin expansion for 
this method have the form
\begin{align}
	\phi_n^{\alpha_1,1}(t) &= t^{\alpha_1}L_n^{(\alpha_1)}(t),
\end{align}
so adjusting this tunable parameter requires an entirely new approximation. Since the fabricated 
solution has a fractional singularity of order $1/4,$ we expect that the method will return the 
exact solution when $\alpha_1 = 1/4$. We can see that this is 
consistent with Figure \ref{example1}, where the relative errors corresponding to these values 
of $\alpha_1$ drop to machine precision after three and four terms are used in the expansion, 
respectively.

We achieve algebraic convergence in this example, since the solution has finite regularity. 
The rates of convergence printed in the legend of Figure \ref{example1} are computed 
by taking the slope in the log-log scale of the line between the last two computed relative errors. 
In view of the regularity of the fabricated solution, the results in Figure \ref{example1} demonstrate 
that the method converges optimally for this example.

Further, the tunable accuracy of the method is demonstrated in that the smallest perturbation 
from the optimal $\alpha_1$-values results in the fastest rate of convergence (apart from the 
case where the solution is achieved exactly).

\FloatBarrier

\begin{figure}[ht!]
\begin{center}
	\includegraphics[width=11cm]{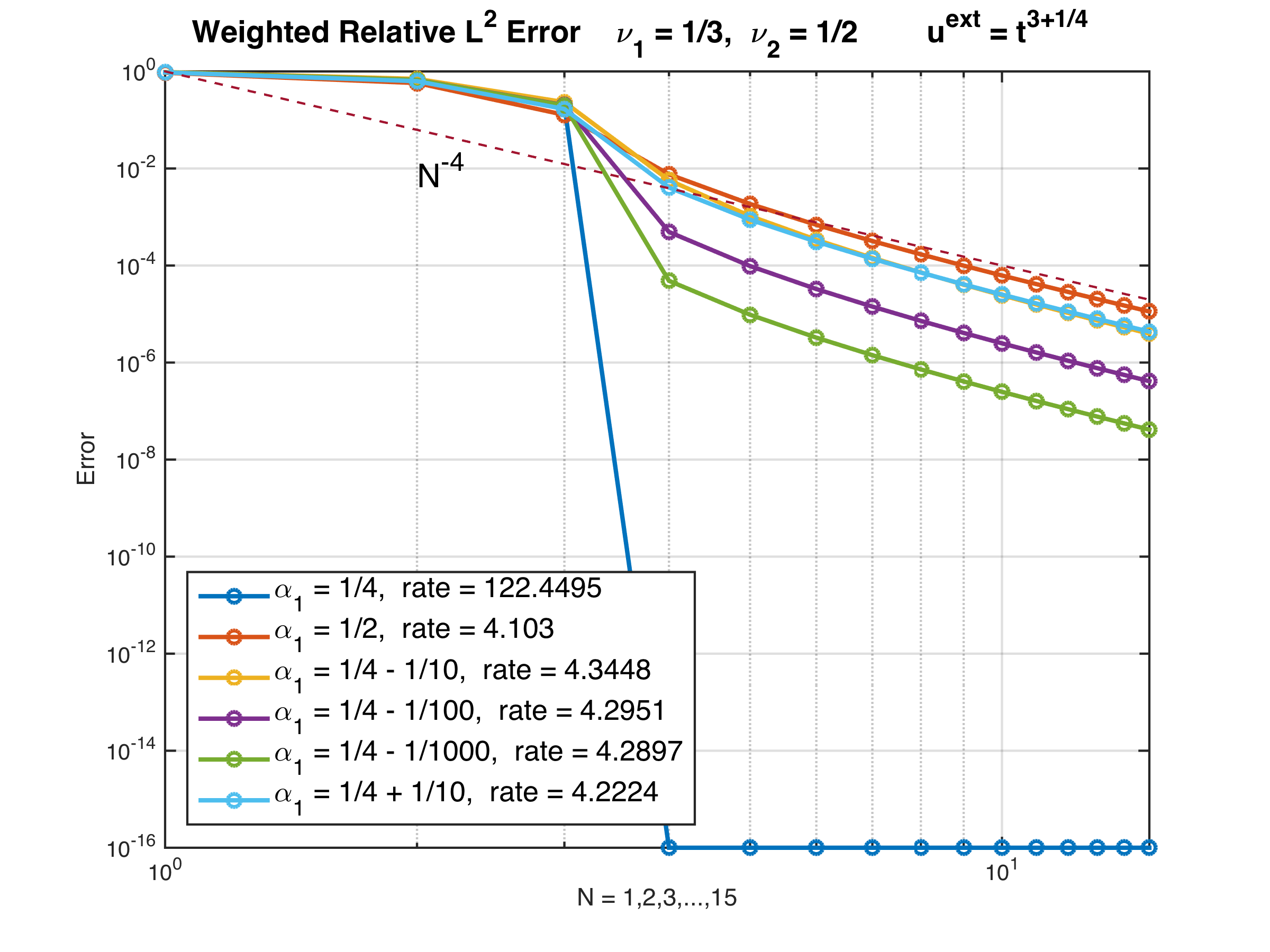}
\end{center}

\caption{Weighted relative $L^2$-error for Example 1 in a log-log scale.}
\label{example1}
\end{figure}

\FloatBarrier

\subsection{Example 2.}

In the next example, we solve the multi-term FIVP
\begin{align}
\begin{split}
	{}_0\mcD_t^{2/3} u(t) + {}_0\mcD_t^{1/10} u(t) &= f(t), \\
	u(0) &= 0.
\end{split}
\end{align}

In this case, we use the fabricated solution $u^{\text{ext}}(t) = t^{5+1/2}.$ The purpose of 
this example is to further assure that the method achieves or exceeds optimal convergence 
rates for any value of $\alpha_1$ given that the fabricated solution is not very smooth. In Example 
2, the highest order derivative of $u^{\text{ext}}(t)$ is order five, but we see that even for 
$\alpha_1$ far away from its optimal value ($\alpha_1 = 1/2$), the convergence rate exceeds seven (e.g., $\alpha_1 = 1/10$).

\FloatBarrier

\begin{figure}[ht!]
\begin{center}
	\includegraphics[width=11cm]{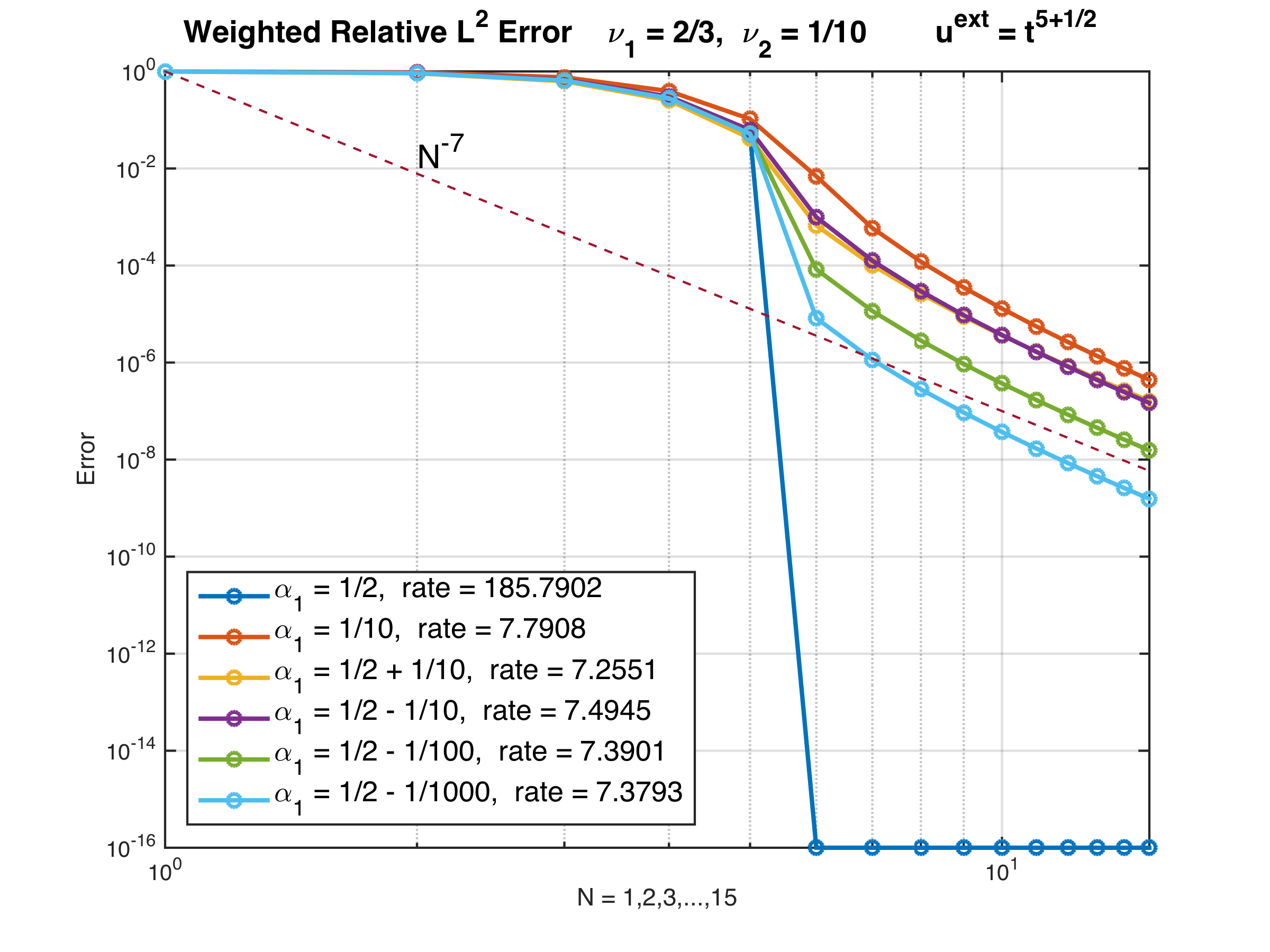}
\end{center}

\caption{Weighted Relative $L^2$-error for Example 2 in a log-log scale.}
\label{example2}
\end{figure}

\FloatBarrier

\subsection{Example 3.}

We again solve a two-term FIVP:
\begin{align}
\begin{split}
	{}_0\mcD_t^{1/4} u(t) + {}_0\mcD_t^{1/5} u(t) &= f(t), \hspace{15pt} t \in (0,+\infty), \\
	u(0) &= 0.
\end{split}
\end{align}
We use the fabricated solution $u^{\text{ext}}(t) = t^{1/2}\sin(t).$

We observe exponential convergence of the method for this example as shown in 
Figure \ref{example3}. In this case, the numerical results for different values of $\alpha_1$ 
are not so different from each other as in the previous examples. The error from choosing a sub-optimal 
$\alpha_1$ value is dominated by the error in approximating the sine function.

\FloatBarrier

\begin{figure}[ht!]
\begin{center}
	\includegraphics[width=11cm]{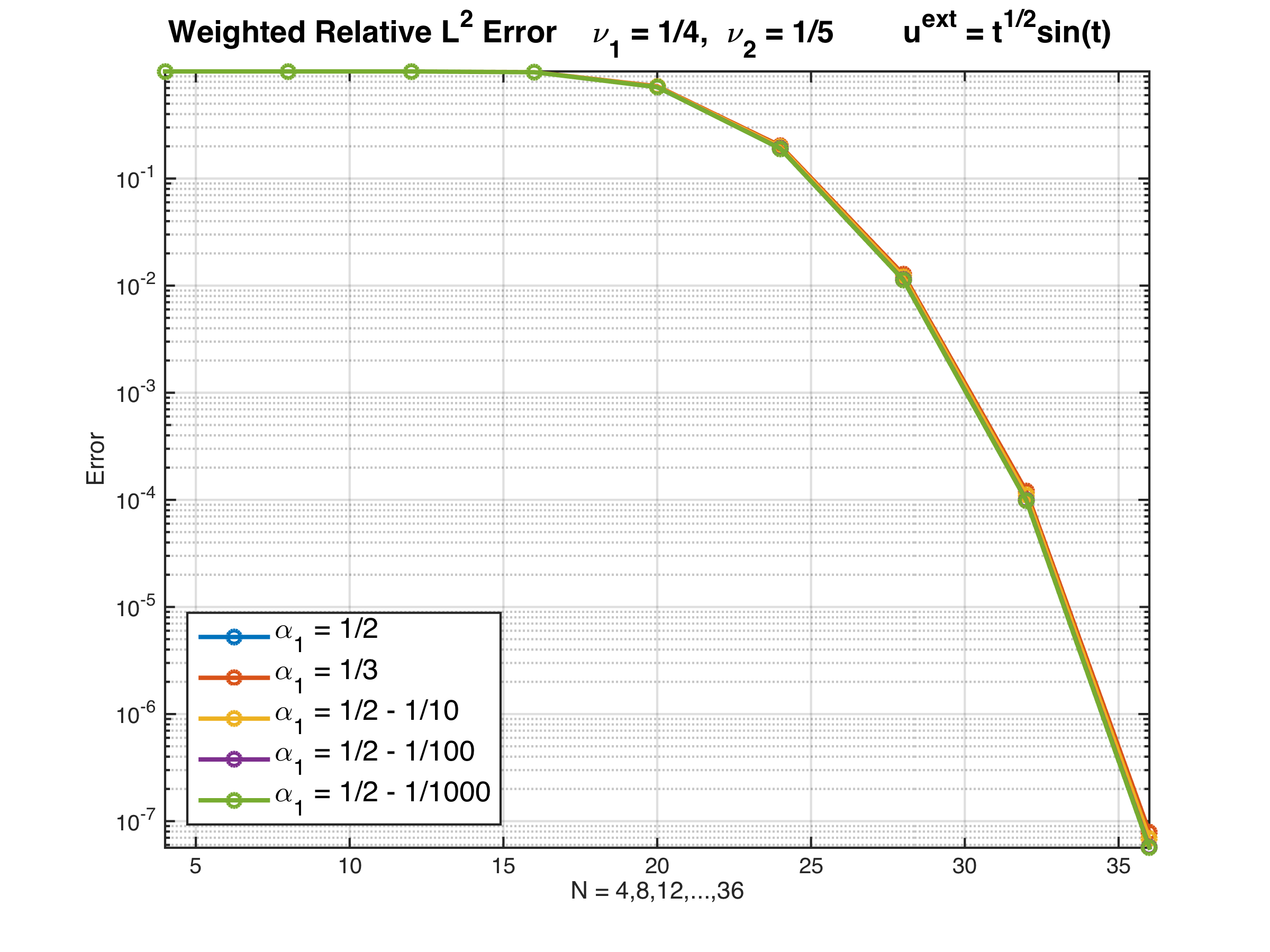}
\end{center}

\caption{Weighted relative $L^2$-error for Example 3 in a log-linear scale.}
\label{example3}
\end{figure}

\FloatBarrier

\subsection{Example 4.}

In Example 4, we solve the two-term FIVP
\begin{align}
\begin{split}
	{}_0\mcD_t^{4/5} u(t) + {}_0\mcD_t^{1/2} u(t) &= f(t), \hspace{15pt} t \in (0,+\infty)\\
	u(0) &= 0.
\end{split}
\end{align}
We use the fabricated solution $u^{\text{ext}}(t) = 5t^{7/2} + 4t^2 + t^{5/3}.$ We believe 
this to be an interesting example because the optimal value of $\alpha_1$ is not clear. Using 
our set of basis functions to approximate this solution will not allow us to capture the result 
exactly in only a few terms as before, since there are two terms with different order fractional 
singularities at $t = 0.$

As shown in Figure \ref{example4}, the approximation using $\alpha_1 = 1/2$ seems to give 
the best approximation to the fabricated solution after the first few values of $N$, although the 
asymptotic convergence rate is slower than for the other tested values. The $\alpha_1$ with the 
fastest convergence rate of those tested is $\alpha_1 = 1/6$.

\FloatBarrier

\begin{figure}[ht!]
\begin{center}
	\includegraphics[width=11cm]{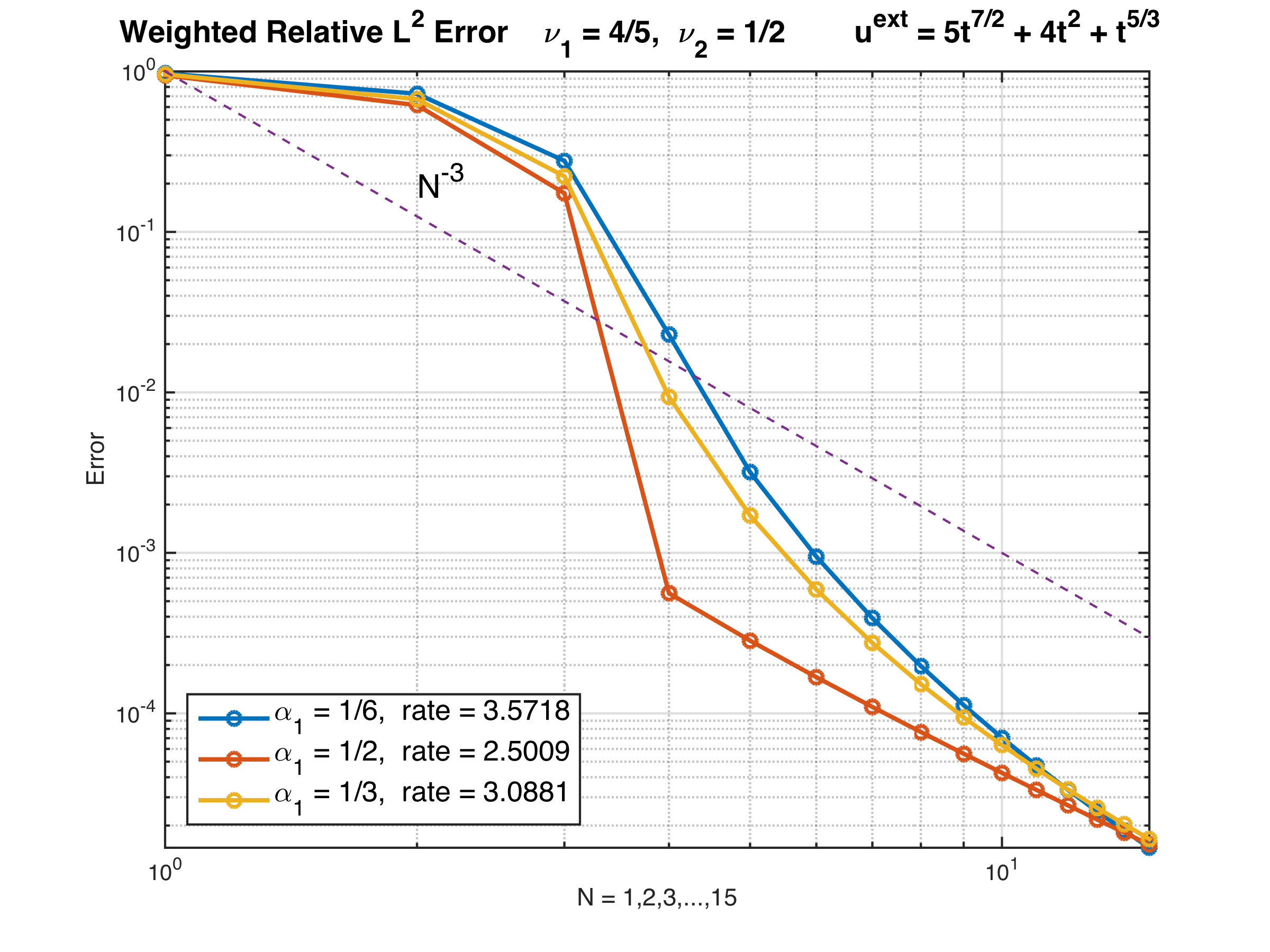}
\end{center}

\caption{Weighted Relative $L^2$-error for Example 4 in a log-log scale.}
\label{example4}
\end{figure}

\FloatBarrier

\subsection{Example 5.}
To demonstrate that we can also solve equations with a larger number of terms with high accuracy, we solve the fifty-term FIVP:
\begin{align}
\begin{split}
	\sum_{i=1}^{50} {}_0\mcD_t^{\nu_i}u(t) &= f(t), \hspace{15pt} t \in (0,+\infty)\\
	u(0) &= 0,
\end{split}
\end{align}
where each $\nu_i \in [0,m],$ with $m \leq 1.$ In this case, 
\begin{align}
\label{orders}
	\nu_i = \frac{(i-1) m}{K-1}, \hspace{15pt} K = 50, \hspace{15pt} m = \frac{11}{12}.
\end{align}
 We use the fabricated solution $u^{\text{ext}}(t) = t^{2+1/4}$ to plot the 
weighted relative $L^2$ error in Figure \ref{fig:example5}

\FloatBarrier

\begin{figure}[ht!]
\begin{center}
	\includegraphics[width=11cm]{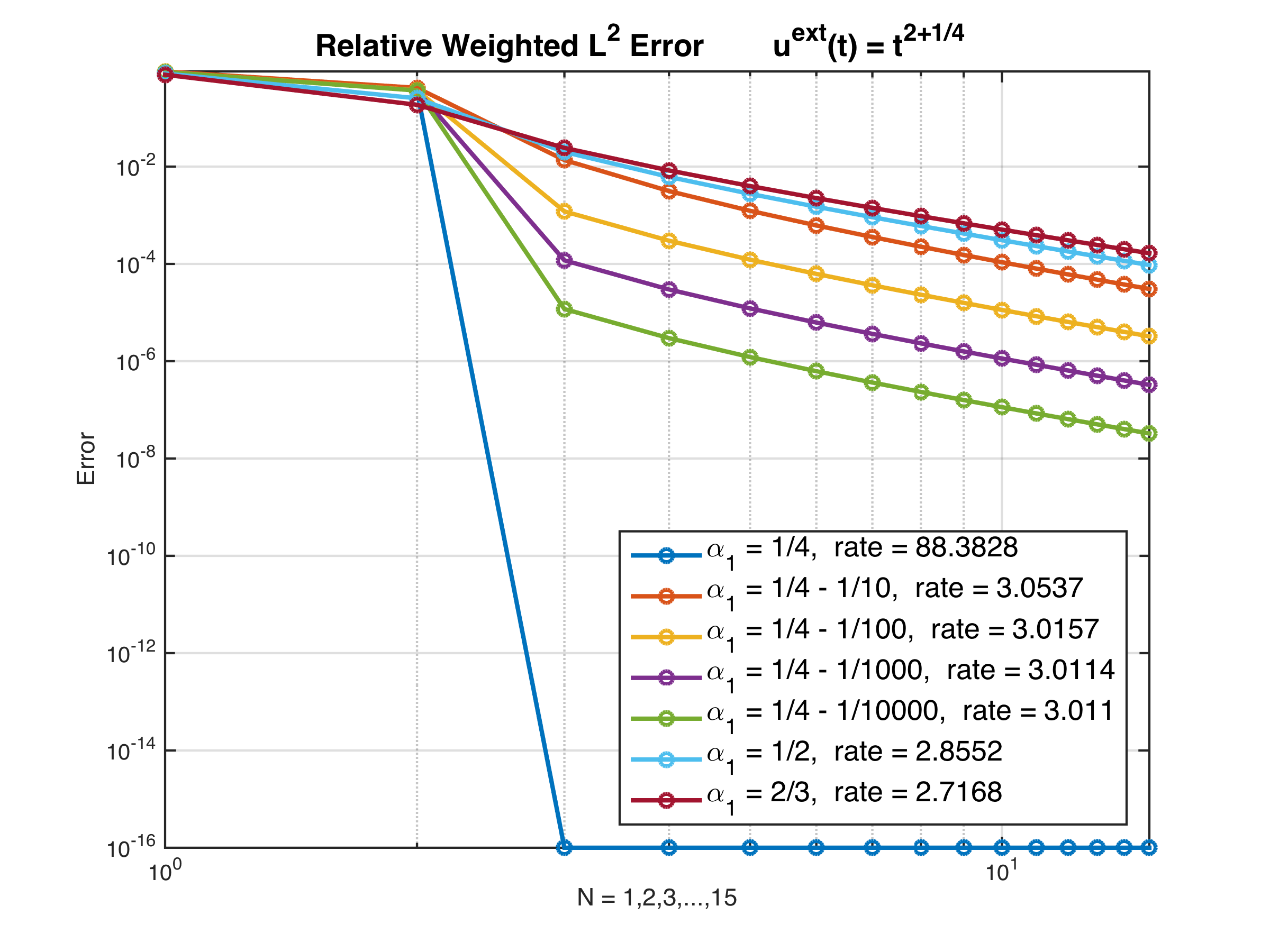}
\end{center}

\caption{Weighted Relative $L^2$-error for Example 5 in a log-log scale.}
\label{fig:example5}
\end{figure}

\FloatBarrier

For this example, we also computed the condition numbers of the stiffness matrices resulting from 
the different values of $\alpha_1.$ We observe that the condition numbers all grow at a rate slower 
than $N.$

\FloatBarrier

\begin{center}
\begin{tabular}{| c | c | c | c | c | c |}
	\hline
	$N$ & $\alpha_1 = \frac{1}{4}$ & $\alpha_1 = \frac{1}{4}-\frac{1}{10}$ & $\alpha_1 = \frac{1}{4} - \frac{1}{100}$ & $\alpha_1 = \frac{1}{2}$ & $\alpha_1 = \frac{2}{3}$ \\
	\hline
	\hline
	2 & 1.5886  &  1.5620  &  1.5849  &  1.7280  &  1.8531 \\
	4 & 2.4325  &  2.2840   & 2.4152  &  2.9990  &  3.4963 \\
	6 & 3.2292  &  2.9345  &  3.1958  &  4.3119 &   5.2943 \\
	8 & 3.9999  &  3.5478  &  3.9490  &  5.6683  &  7.2349 \\
	10 & 4.7533  &  4.1354  &  4.6838 &   7.0639  &  9.3016 \\
	12 & 5.4944  &  4.7040  &  5.4053 &   8.4953 &  11.4816 \\
	14 & 6.2260  &  5.2576  &  6.1166  &  9.9595 &  13.7654 \\
	\hline
\end{tabular}
\captionof{table}{Condition numbers of the stiffness matrices $S$ in the fifty-term equation for different values of the tuning parameter $\alpha_1$.}
\end{center}

\FloatBarrier

In order to compare timings of the method for different values of $K,$ we timed our PG method solving the 
equation in Example 5 for $K = 2, 10,$ and $50$, where the orders $\nu_i$ are defined using the formula in 
\eqref{orders}. In Figure \ref{fig:timings}, we show the timings in actual seconds for $N = 1,2,3,...,30,$ along 
with a best-fit line. The timings include the computation of the load vector $\vec{\hat{f}}$ and inverting the 
linear system to solve for the coefficients $\vec{a}.$ As $N$ increases, we also increase the number of 
quadrature points used for computing $\vec{\hat{f}}$ to maintain the desired level of accuracy. These 
timings were collected with Mathematica using a 3 GHz Intel Core i7 processor.

\FloatBarrier

\begin{figure}[ht!]
\centering
\begin{subfigure}{0.49\textwidth}
\centering
\includegraphics[width = 6.5cm]{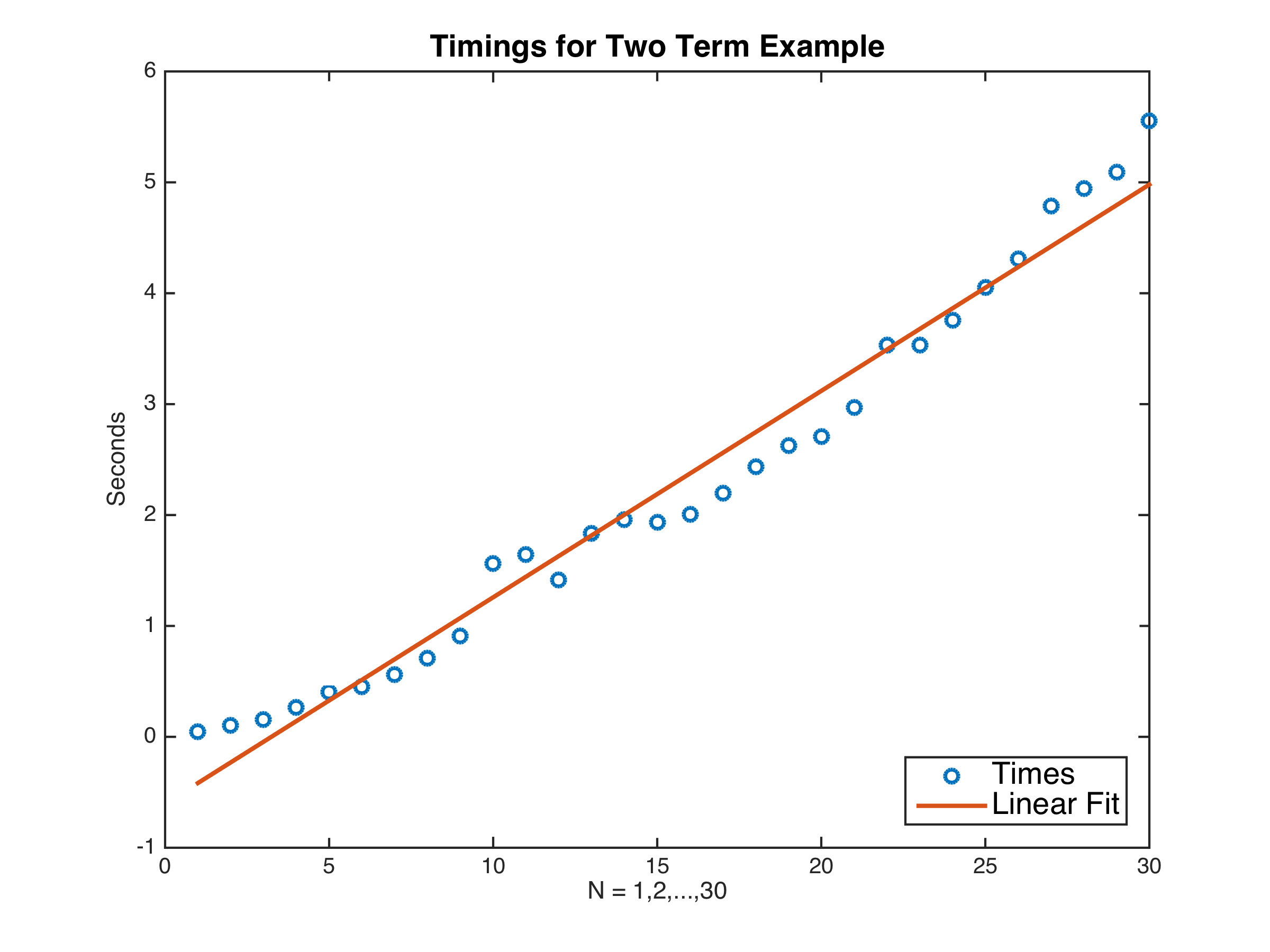}
\caption{Two-term equation.}
\label{fig:left}
\end{subfigure}
\begin{subfigure}{0.49\textwidth}
\centering
\includegraphics[width = 6.5cm]{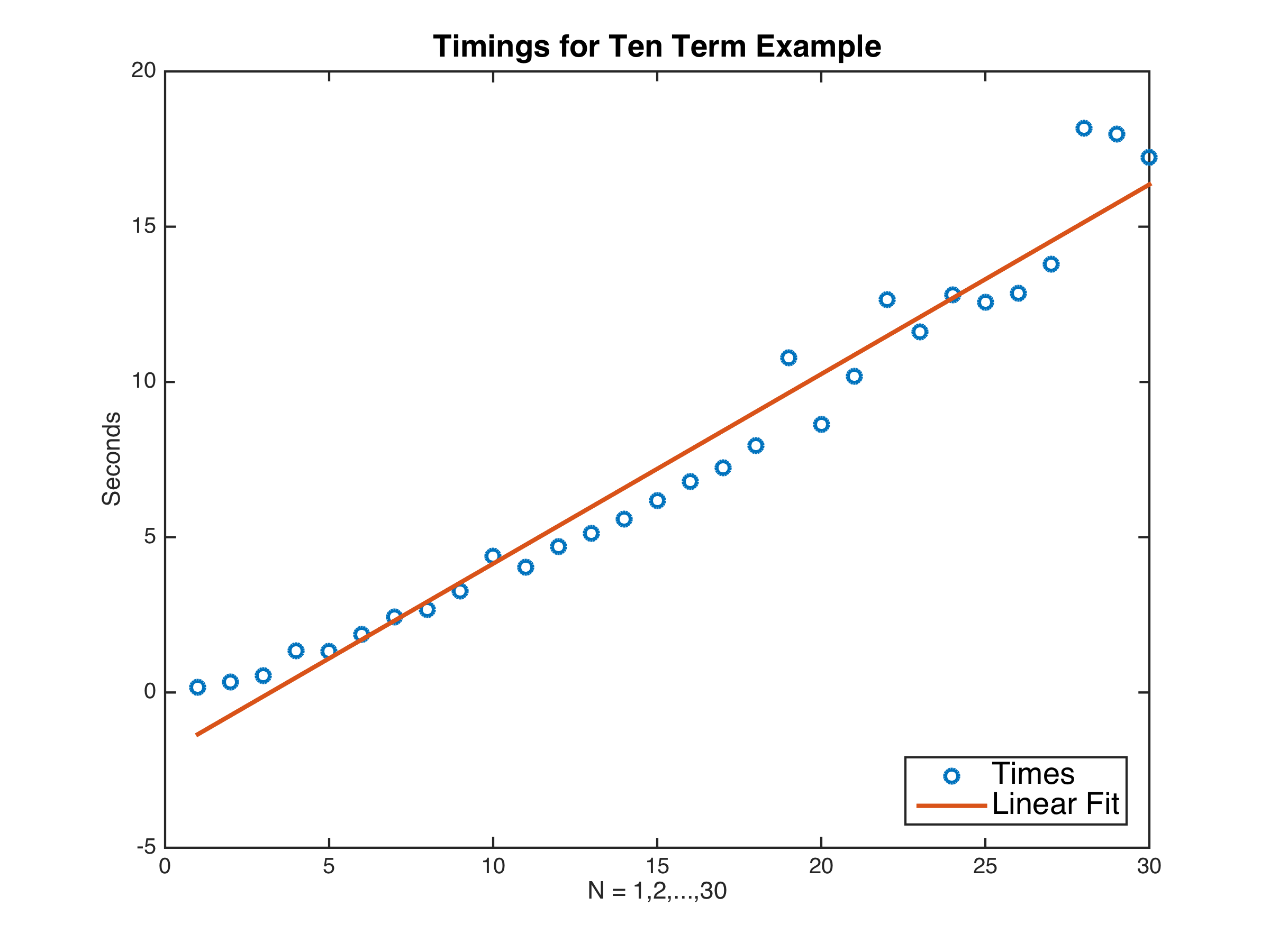}
\caption{Ten-term equation.}
\label{fig:right}
\end{subfigure}
\begin{subfigure}{0.49\textwidth}
\centering
\includegraphics[width = 6.5cm]{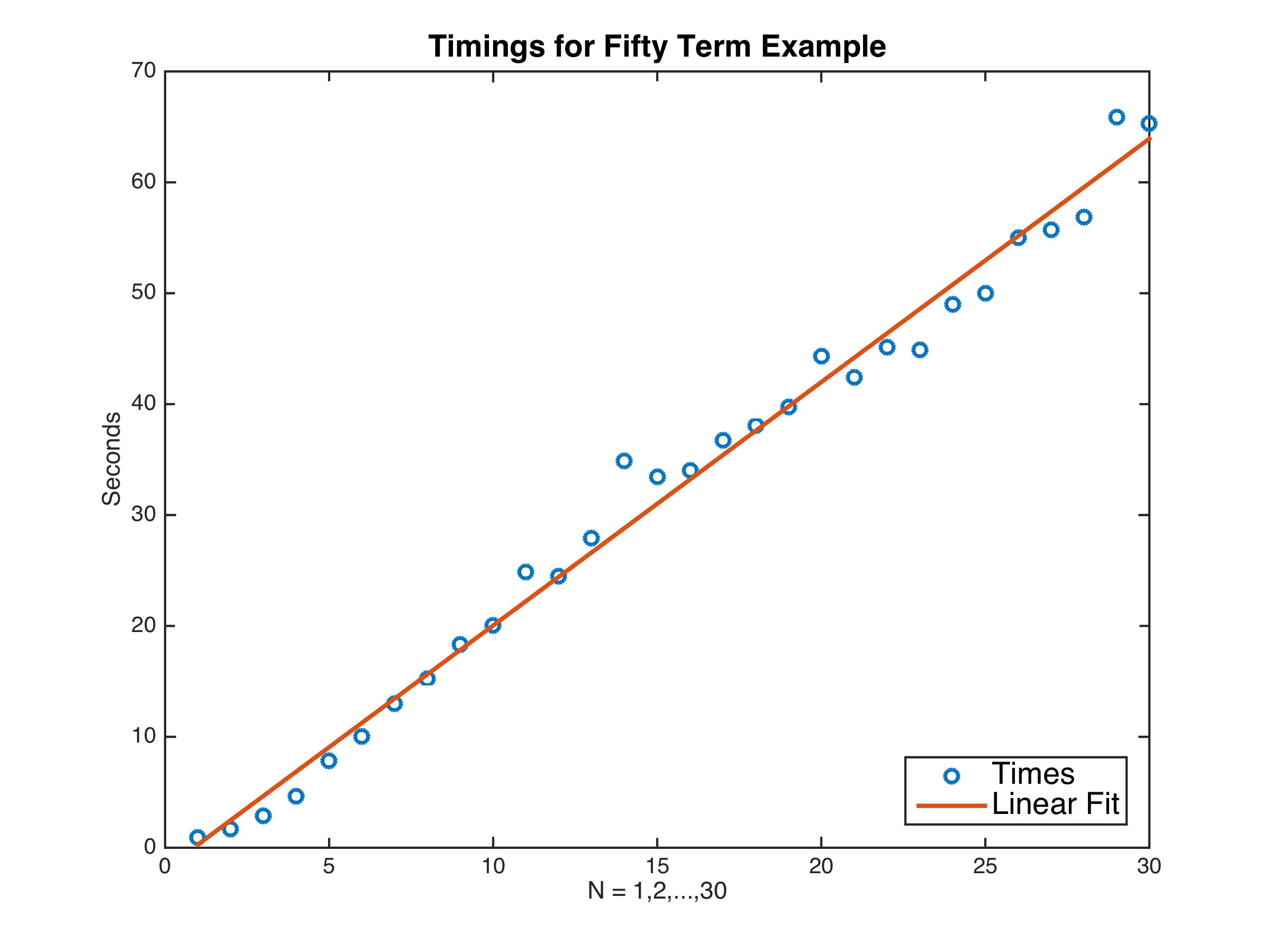}
\caption{Fifty-term equation.}
\label{fig:bottom}
\end{subfigure}
\caption{Timings in actual seconds for the equation from Example 5 with (a) $K = 2$, (b) $K = 10$, and (c) $K = 50$.}
\label{fig:timings}
\end{figure}

%%%%%%%%%%%%%%%%%%%%%%%%%%%%%%%%%%%%%%%%%%%%%%%%%%

\section{Application to distributed order equations}

Multi-term fractional differential equations have been used in combination with a quadrature rule to solve 
distributed order differential equations of the form
\begin{align}
\label{disteqn}
	\int_0^m g(r) {}_0\mcD_t^r u(t) \ dr &= f(t),
\end{align}
where the integral on the left hand side is called the \textit{distributed order derivative}. The function $g(r)$ 
that appears in the integrand is a distribution where the argument $r$ corresponds to the order of the fractional 
derivative. This function must be integrable on $[0,m]$ and satisfy the property $g(r) \geq 0$ for all $r \in [0,m]$.

The idea for solving this equation using multi-term fractional differential equations was proposed by Diethelm 
and Ford \cite{diethelm}, where they applied trapezoidal quadrature to the integral in \eqref{disteqn} to derive  
a linear multi-term equation in a bounded interval with constant coefficients, and then applied a finite difference method 
to solve the distributed order equation.
This application highlights the usefulness of algorithms, which can efficiently solve multi-term equations with 
a high number of terms, as may be necessary to decrease the error due to the quadrature. 

{We observed the convergence rate of the trapezoid rule to be much slower than that of Gauss-Legendre 
quadrature, which has been shown to be spectrally accurate for this setting in the paper by Kharazmi et al. \cite{kharazmi}. 
This is shown in Figures \ref{fig:example6a}, \ref{fig:example6b}, and \ref{fig:example6c}, while in subsequent examples, 
we only show the error plots using Gauss-Legendre quadrature.

\subsection{Numerical method}

We are interested in solving the distributed order fractional differential equation on the half line:
\begin{align}
\label{disteqn2}
\begin{split}
	\int_0^m g(r) {}_0\mcD_t^r u(t) \ dr &= f(t), \hspace{10pt} t \in (0, +\infty) \\
	u(0) &= 0,
\end{split}
\end{align}
where $m \in [0,1]$ and ${}_0\mcD_t^r [\cdot]$ represents a Riemann-Liouville fractional derivative.

We apply Gauss-Legendre quadrature to the left hand side of \eqref{disteqn2} side to get the multi-term FIVP:
\begin{align}
\begin{split}
	\sum_{i=1}^K w_i g(\nu_i) {}_0\mcD_t^{\nu_i}u(t) &\approx f(t), \hspace{10pt} t \in (0,+\infty) \\
	u(0) &= 0,
\end{split}
\end{align}
where $K$ is the number of quadrature nodes $\{\nu_i\}$ and the weights of the quadrature rule are represented by 
$\{w_i\}_{i=1}^K$.
Recall that we approximate the solution to the multi-term equation as
\begin{align}
	u(t) &\approx u_N(t) = \sum_{n=1}^N a_n \phi_n^{\alpha_1,1}(t),
\end{align}
where
\begin{align}
	\phi_n^{\alpha_1,1}(t) &:= t^{\alpha_1} L_{n-1}^{(\alpha_1)}(t),
\end{align}
where $L_{n-1}^{(\alpha_1)}(t)$ is the associated Laguerre polynomial of order $n-1.$

We integrate against the test functions
\begin{align}
	\phi_k^{\alpha_2,2}(t) &:= e^{-t} L_{k-1}^{(\alpha_2)}(t)
\end{align}
where $\alpha_2 = \alpha_1 - \nu_1.$ Then the variational form for the Petrov-Galerkin method is given by
\begin{align}
\begin{split}
	\int_0^\infty \phi_k^{\alpha_2,2}(t) \sum_{i=1}^K w_i g(\nu_i) {}_0\mcD_t^{\nu_i} \left(\sum_{n=1}^N a_n \phi_n^{\alpha_1,1}(t) \right) dr dt &= \int_0^\infty f(t) \phi_k^{\alpha_2,2}(t) dt \\
	&=: \hat{f}_k.
\end{split}
\end{align}
Next, we apply fractional integration by parts and the properties of the GALFs as described above:
\begin{align}
	\sum_{n=1}^N a_n \frac{\Gamma(n+\alpha_1)}{\Gamma(n)}  \left[ w_1 g(\nu_1) \delta_{kn} + \sum_{i=2}^K w_i g(\nu_i) \int_0^\infty e^{-t} L_{n-1}(t) L_{k-1}^{(\nu_i - \nu_1)}(t) \ dt \right] &= \hat{f}_k.
\end{align}

It remains to solve the linear system
\begin{align}
	S\vec{a} = \vec{\hat{f}}
\end{align}
for the vector of coefficients $\vec{a}$ using the factorization methods as described above, where the 
stiffness matrix $S$ is given by
\begin{align}
	S_{kn} = w_1g(\nu_1) \delta_{kn} + \sum_{i=2}^K w_i g(\nu_i) \int_0^\infty e^{-t}L_{n-1}(t) L_{k-1}^{(\nu_i-\nu_1)}(t) \ dt.
\end{align}

\section{Numerical results for distributed order equations}

We present convergence results of our PG method and Gauss-Legendre quadrature applied to the 
distributed order equation \eqref{disteqn2}. The distribution functions $g(r)$ are chosen to be smooth on the 
interval $[0,m]$ where $m < 1.$

\subsection{Example 6.}

In this example, we choose the fabricated solution to be the smooth function $u^{\text{ext}}(t) = t^5$ and 
the distribution function to be $g(r) = \frac{\Gamma(6-r)}{5!}.$ Given these choices, we find that the 
right hand side function $f(t)$ is
\begin{align}
	\int_0^m g(r) {}_0\mcD_t^r u^{\text{ext}}(t) \ dr &= \frac{t^5 - t^{5-m}}{\log(t)} =: f(t).
\end{align}

We can see from the plateaus in the error in Figure \ref{fig:example6a} that the Gauss-Legendre rule 
gives us much faster convergence, as we nearly reach machine precision with $K = 10$ 
quadrature points, as opposed to approximation error of order $\mcO(10^{-3})$ with $K = 50$ quadrature 
points when using the trapezoid rule. We choose the tuning parameter for the PG method to be $\alpha_1 = 1.$

\FloatBarrier

\begin{figure}[ht!]
\centering
\begin{subfigure}{0.49\textwidth}
\centering
\includegraphics[width = 6.5cm]{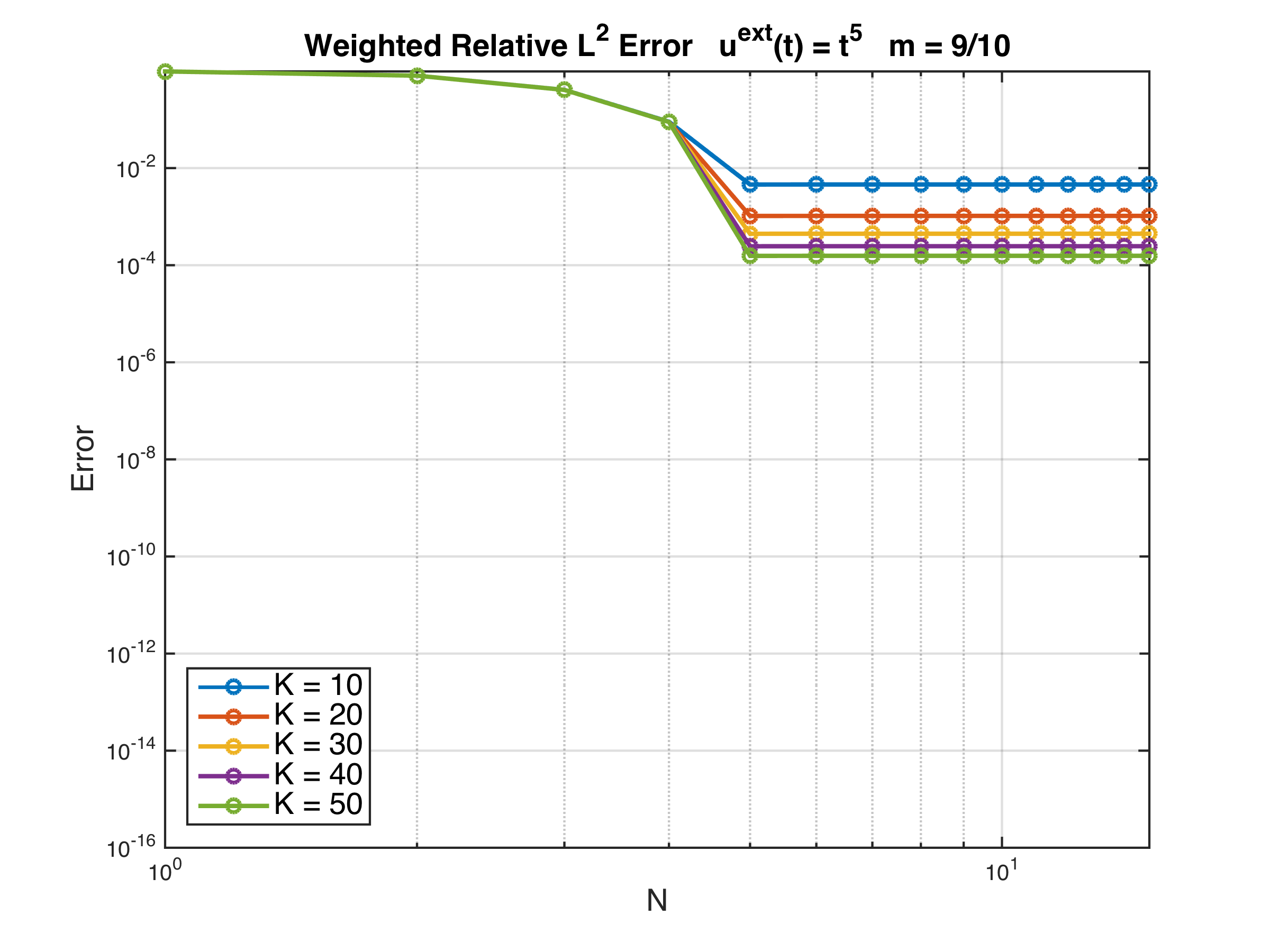}
\caption{Trapezoid rule}
\label{fig:left}
\end{subfigure}
\begin{subfigure}{0.49\textwidth}
\centering
\includegraphics[width = 6.5cm]{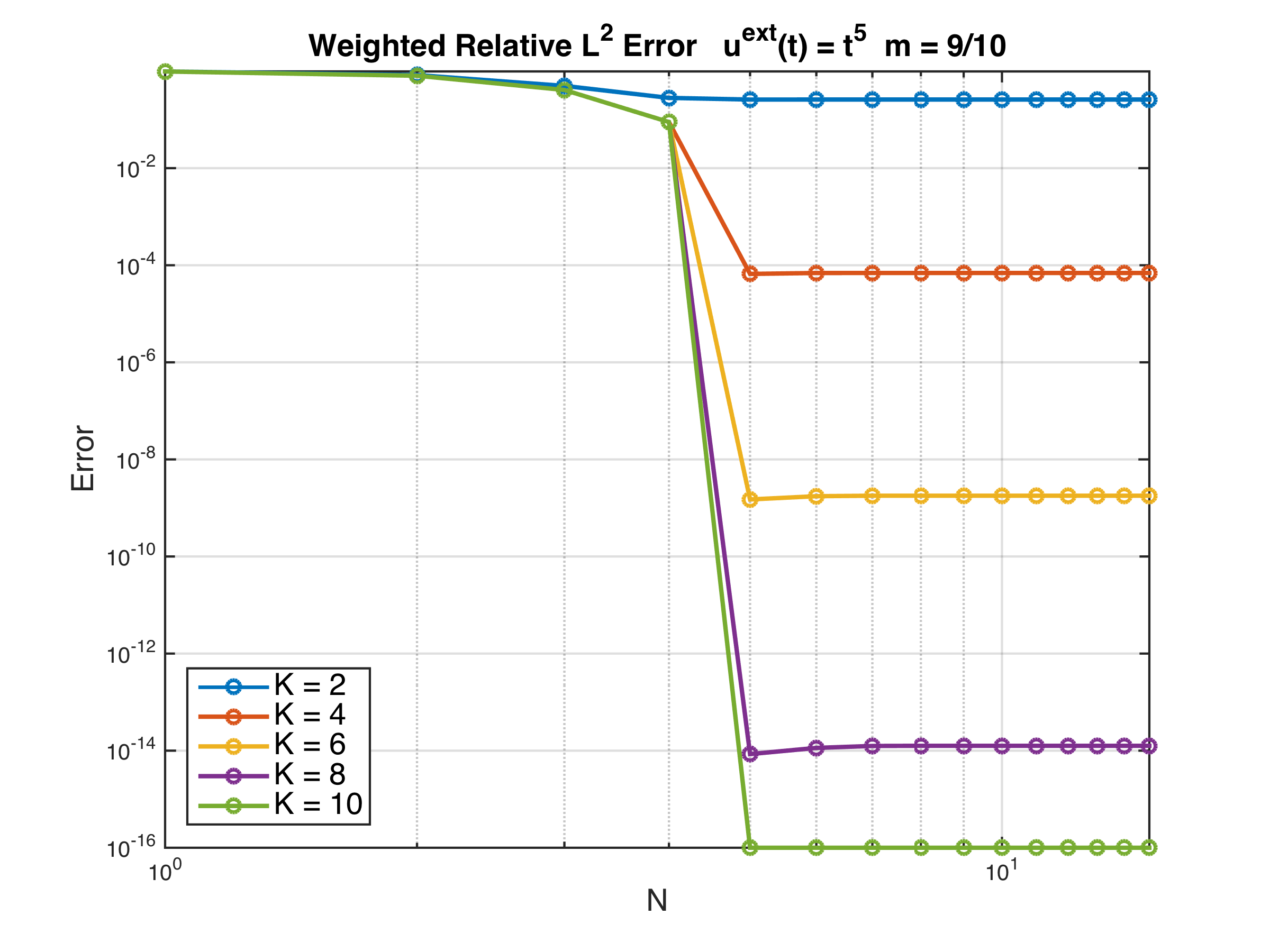}
\caption{Gauss-Legendre quadrature}
\label{fig:right}
\end{subfigure}
\caption{(a) Weighted relative $L^2$ error for the trapezoidal rule and our PG method applied to Example 6 with $m = 9/10$, where $K$ is the number of quadrature points used. \ \ (b) Weighted relative $L^2$ error for Gauss-Legendre quadrature and our PG method applied to Example 6 with $m = 9/10$.}
\label{fig:example6a}
\end{figure}

\FloatBarrier

We also apply our method to the same example using $m = 1/2$ and $m = 1/10$. The weighted relative 
$L^2$ error for both quadrature rules is plotted in Figures \ref{fig:example6b} and \ref{fig:example6c}. 
We see that the error plateaus in both the trapezoid and Gauss-Legendre cases, representing the level 
of error at which the quadrature rule dominates the approximation error of the PG method.

\FloatBarrier

\begin{figure}[ht!]
\centering
\begin{subfigure}{0.49\textwidth}
\centering
\includegraphics[width = 6.5cm]{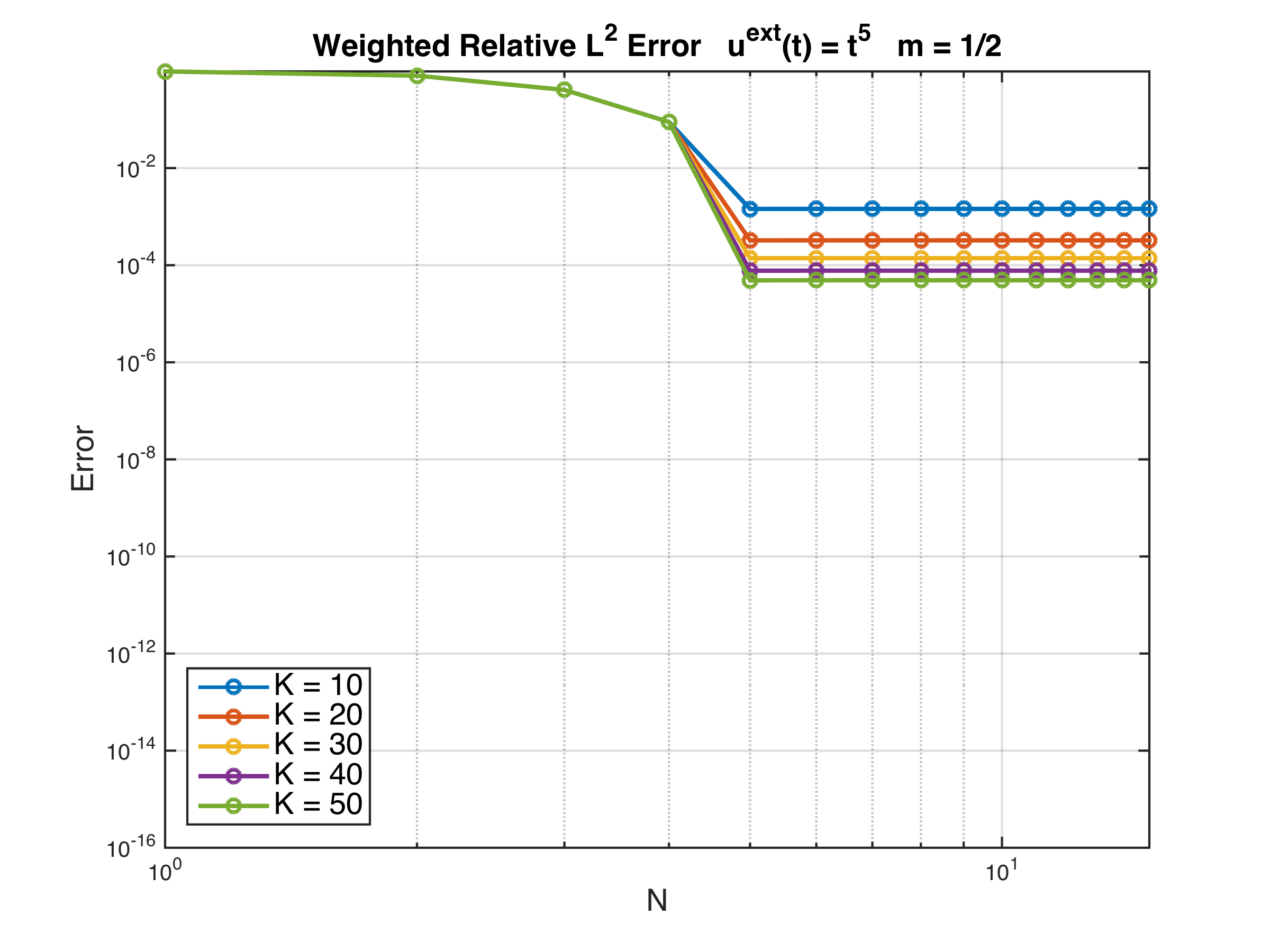}
\caption{Trapezoid rule}
\label{fig:left}
\end{subfigure}
\begin{subfigure}{0.49\textwidth}
\centering
\includegraphics[width = 6.5cm]{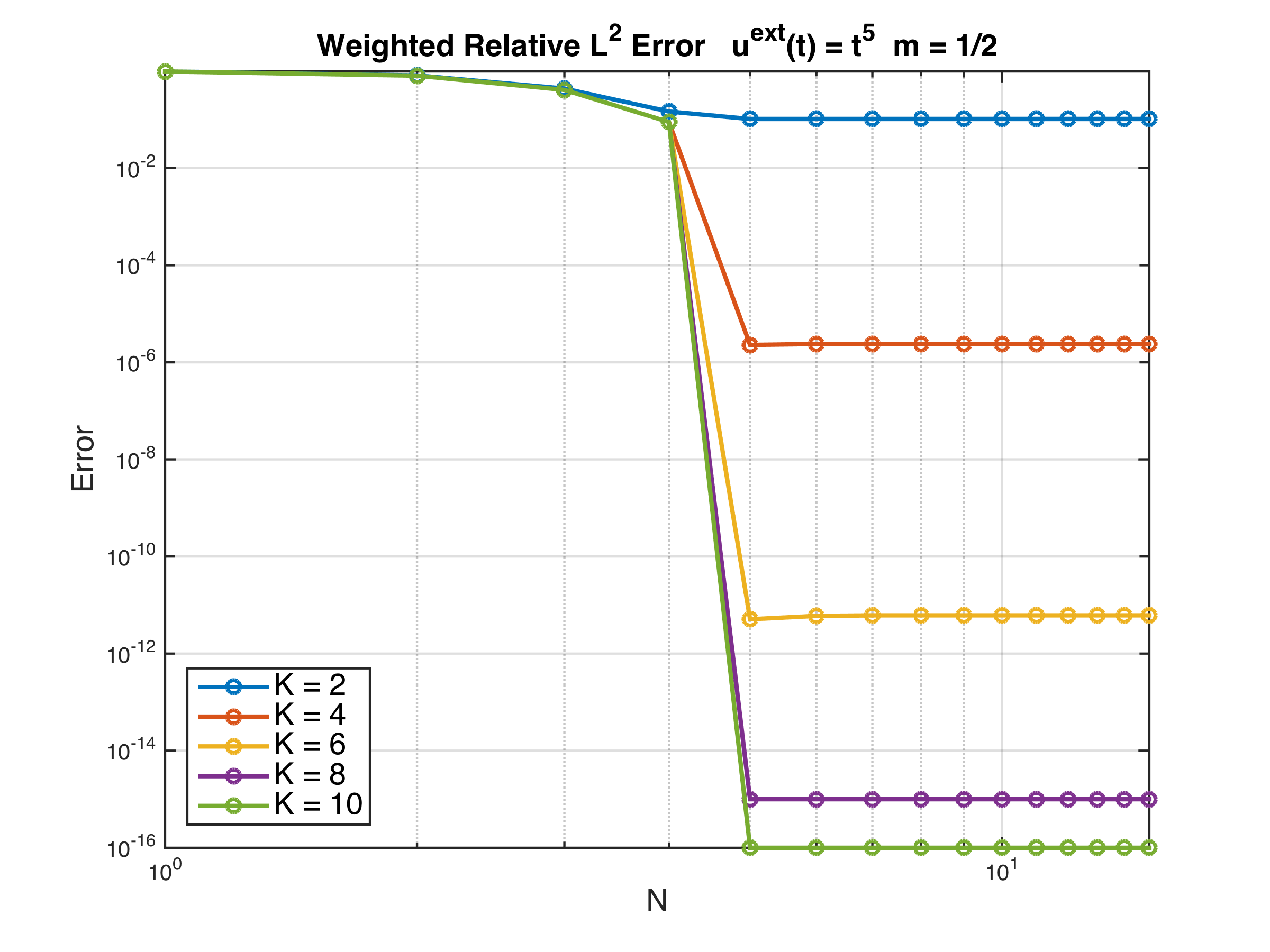}
\caption{Gauss-Legendre quadrature}
\label{fig:right}
\end{subfigure}
\caption{(a) Weighted relative $L^2$-error for the trapezoidal rule and our PG method applied to Example 5 with $m = 1/2$, where $K$ is the number of quadrature points used. \ \ (b) Weighted relative $L^2$-error for Gauss-Legendre quadrature and our PG method applied to Example 5 with $m = 1/2$.}
\label{fig:example6b}
\end{figure}

\begin{figure}[ht!]
\centering
\begin{subfigure}{0.49\textwidth}
\centering
\includegraphics[width = 6.5cm]{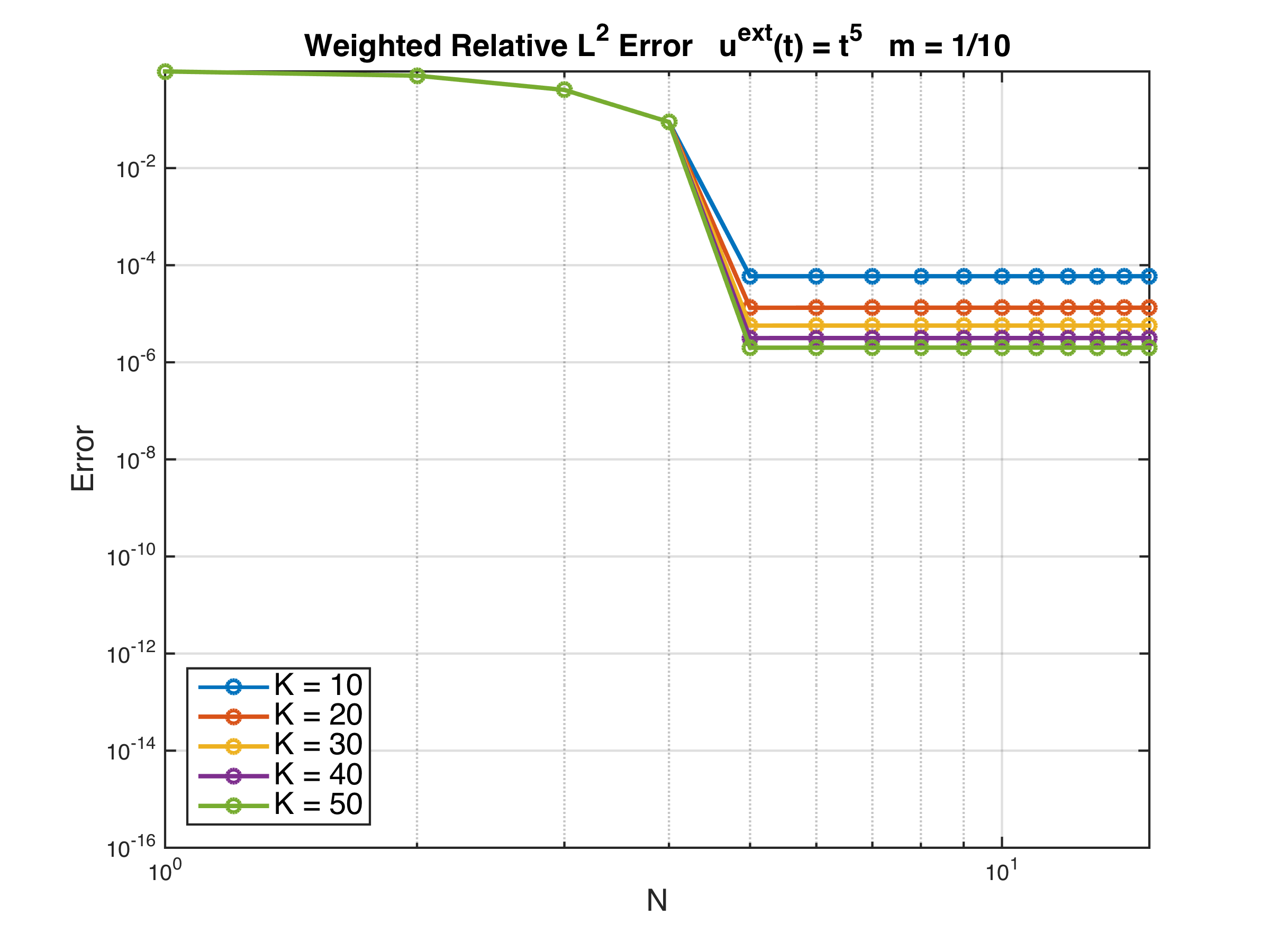}
\caption{Trapezoid rule}
\label{fig:left}
\end{subfigure}
\begin{subfigure}{0.49\textwidth}
\centering
\includegraphics[width = 6.5cm]{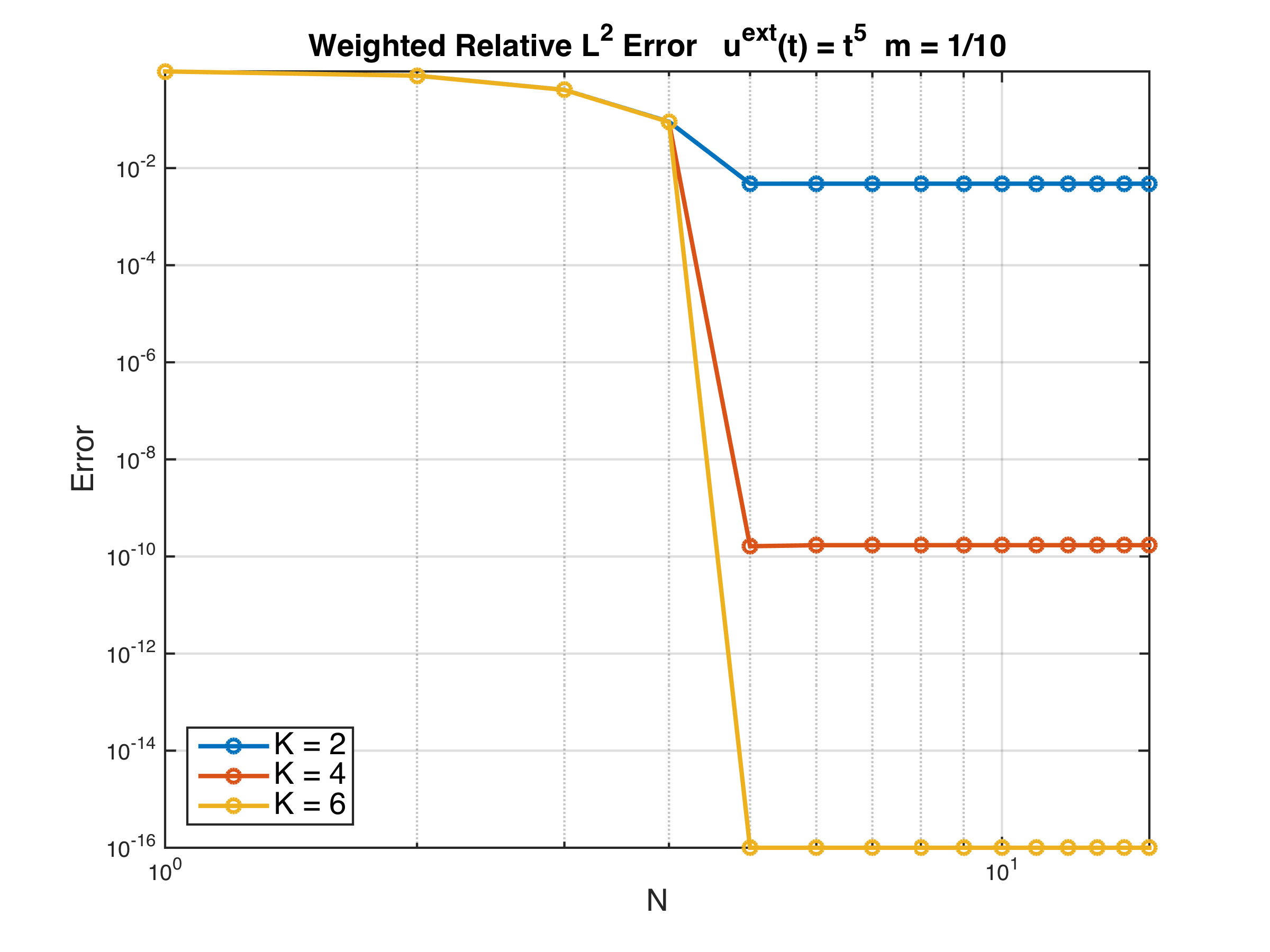}
\caption{Gauss-Legendre quadrature}
\label{fig:right}
\end{subfigure}
\caption{(a) Weighted relative $L^2$-error for the trapezoidal rule and our PG method applied to Example 6 with $m = 1/10$, where $K$ is the number of quadrature points used. \ \ (b) Weighted relative $L^2$-error for Gauss-Legendre quadrature and our PG method applied to Example 6 with $m = 1/10$.}
\label{fig:example6c}
\end{figure}

\FloatBarrier

\subsection{Example 7.}

We again solve equation \eqref{disteqn2} with the solution being a smooth function $u^{\text{ext}}(t) = t^3,$ 
with the distribution function $g(r) = \Gamma(4-r)\sinh(r).$ In this case, the right hand side function $f(t)$ is
\begin{align}
	\int_0^m g(r) {}_0\mcD_t^r u^{\text{ext}}(t) \ dr = \frac{t^{3-m}(t^m-\cosh(m)-\log(t) \sinh(m))}{(\log(t))^2-1} =: f(t).
\end{align}
We again choose the tuning parameter for the PG method to be $\alpha_1 = 1.$ We dispense with the 
Trapezoid rule and only use Gauss-Legendre quadrature in the remaining examples.

The weighted relative $L^2$ error for $m = 9/10$ and $1/10$ is plotted in Figure \ref{fig:example7}.

\begin{figure}[ht!]
\centering
\begin{subfigure}{0.49\textwidth}
\centering
\includegraphics[width = 6.5cm]{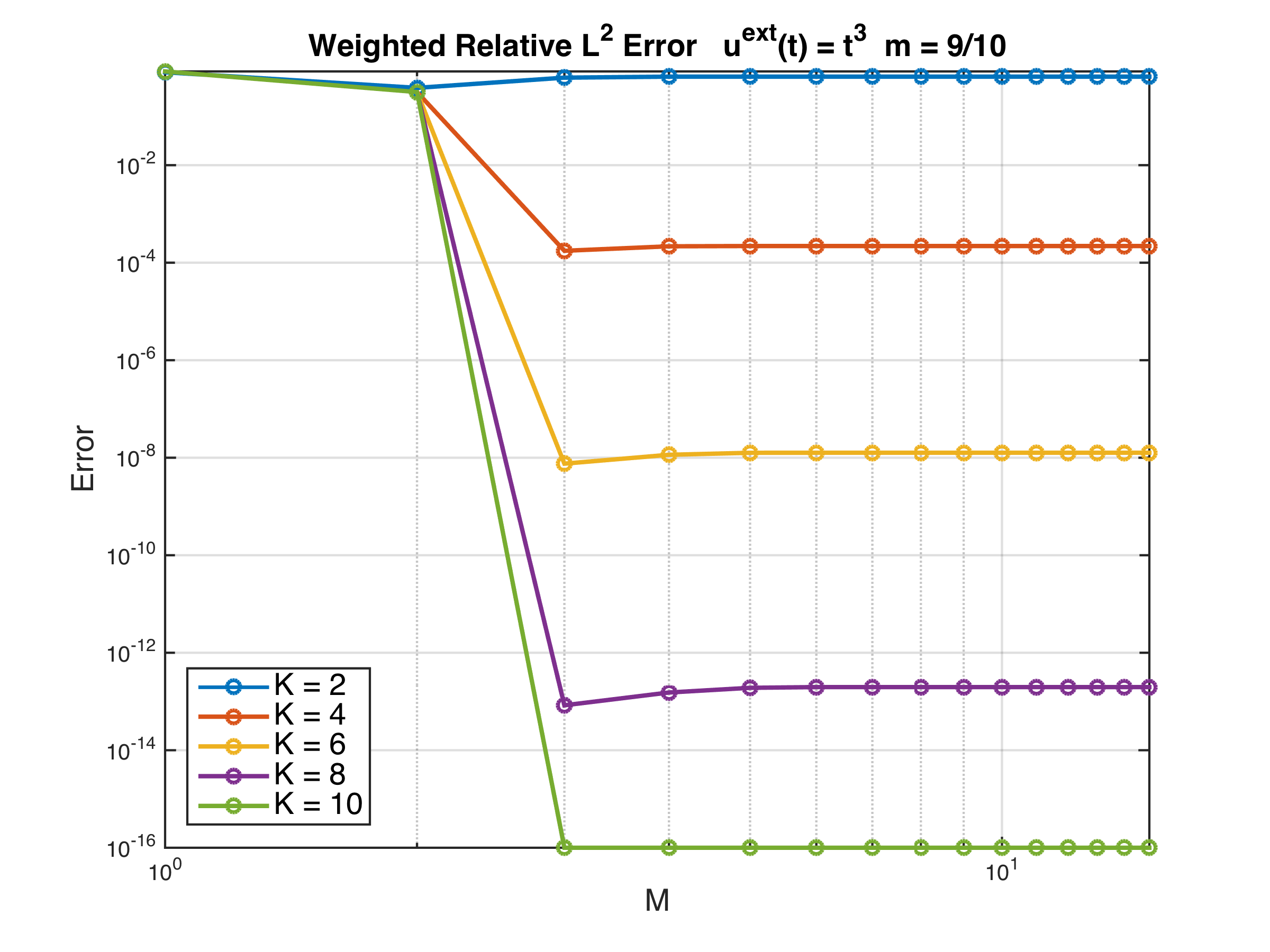}
\caption{$m = 9/10$}
\end{subfigure}
\begin{subfigure}{0.49\textwidth}
\centering
\includegraphics[width = 6.5cm]{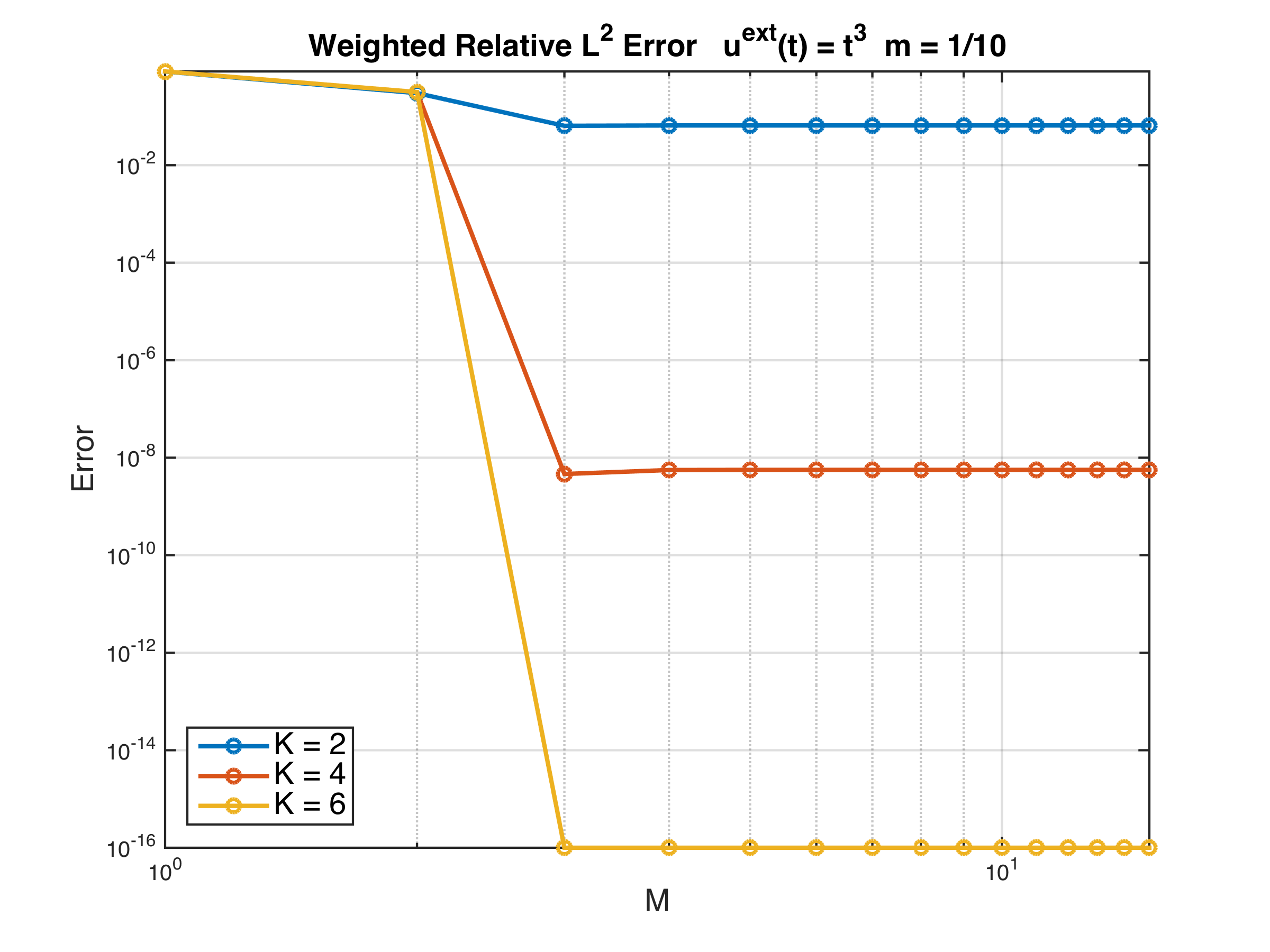}
\caption{$m = 1/10$}
\end{subfigure}
\caption{(a) Weighted relative $L^2$-error for Gauss-Legendre quadrature and our PG method applied to Example 7 with $m = 9/10$, where $K$ is the number of quadrature points used. \ \ (b) Weighted relative $L^2$-error for Example 7 with $m = 1/10.$}
\label{fig:example7}
\end{figure}

\FloatBarrier

\subsection{Example 8.}

Now we test a non-smooth example, where the fabricated solution is $u^{\text{ext}}(t) = t^{\lambda}$ 
with $\lambda = 2+1/3$ and the distribution function is $g(r) = \frac{\Gamma(1+ \lambda - r)}{\Gamma(\lambda+1)}.$ Then the right hand side function $f(t)$ is 
\begin{align}
	\int_0^m g(r) {}_0\mcD_t^r u^{\text{ext}}(t) \ dr = \frac{t^{\lambda - m}(t^m - 1)}{\log(t)} =: f(t).
\end{align}
We choose the tuning parameter for the PG method to be $\alpha_1 = 1/3.$

The weighted relative $L^2$ error for $m = 9/10, 1/2$ and $1/10$ is plotted in Figure \ref{fig:example8}.

\FloatBarrier

\begin{figure}[ht!]
\centering
\begin{subfigure}{0.49\textwidth}
\centering
\includegraphics[width = 6.5cm]{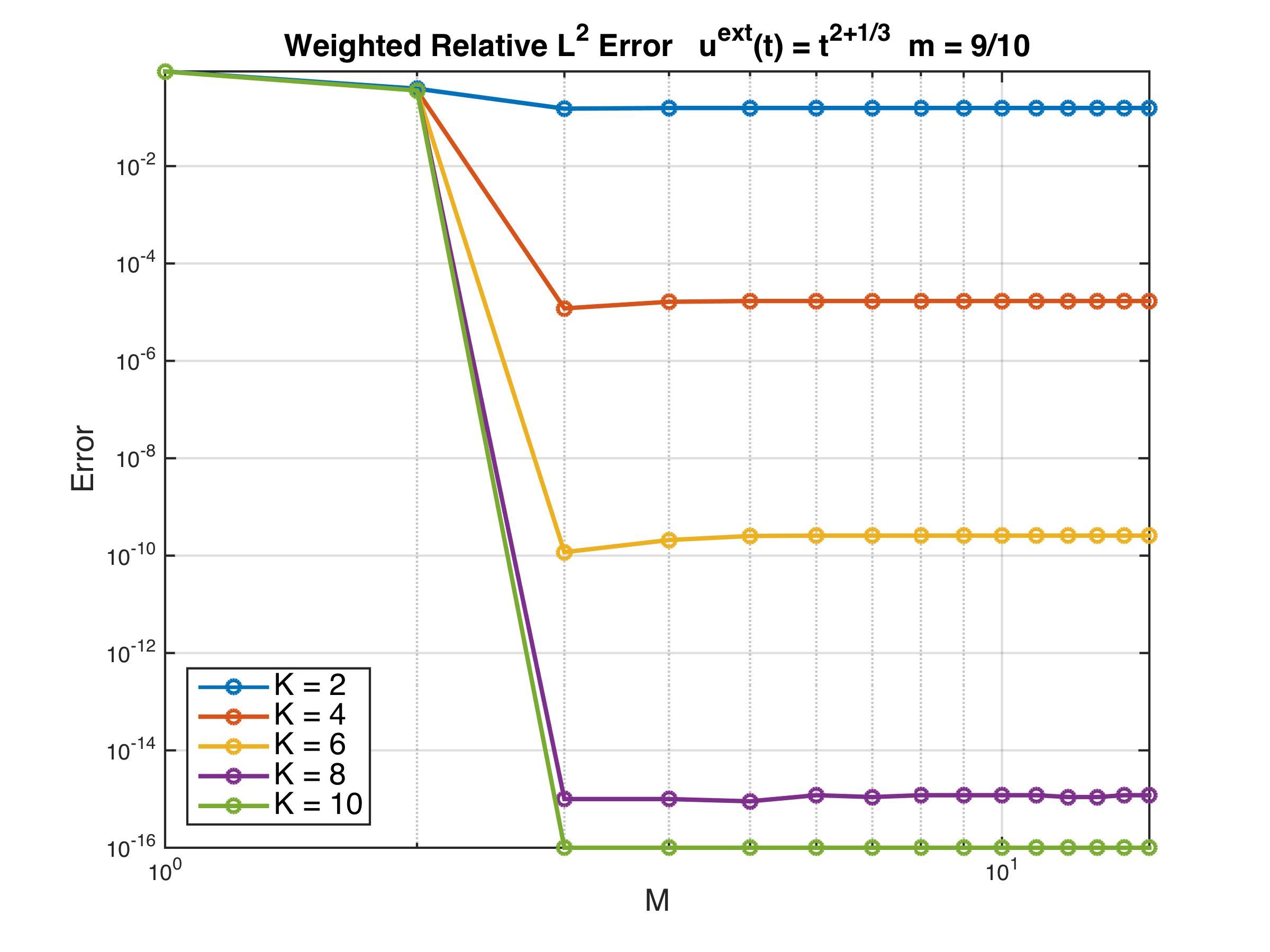}
\caption{$m = 9/10$}
\end{subfigure}
\begin{subfigure}{0.49\textwidth}
\centering
\includegraphics[width = 6.5cm]{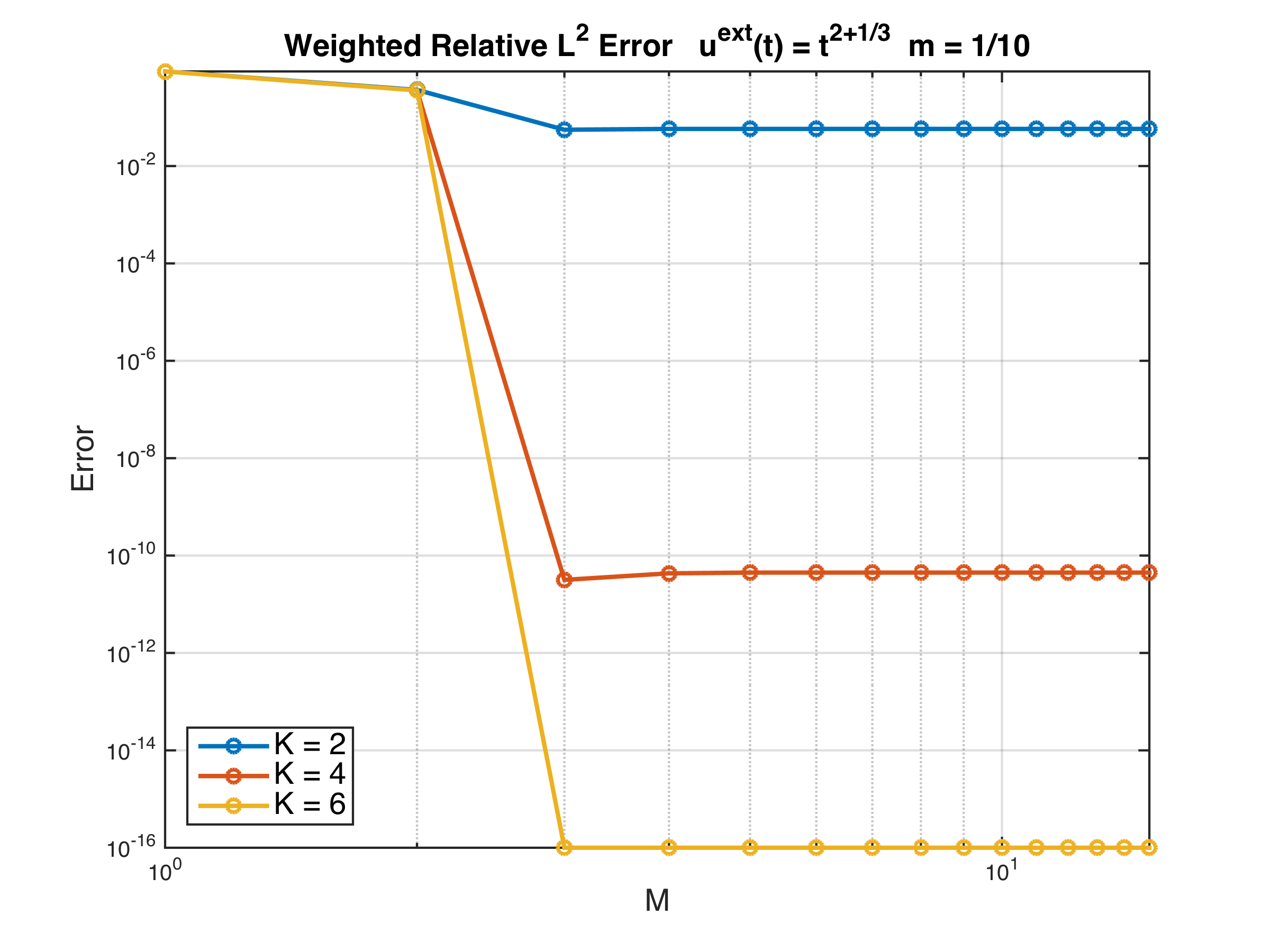}
\caption{$m = 1/10$}
\end{subfigure}
\caption{(a) Weighted relative $L^2$ error for Example 8 with $m = 9/10$. \ \ (b) Weighted relative $L^2$ error for Example 8 with $m = 1/10.$}
\label{fig:example8}
\end{figure}

\FloatBarrier

%%%%%%%%%%%%%%%%%%%%%%%%%%%%%%%%%%%%%%%%%%%%%%%%%%

\section{Summary and Conclusion}

We have presented a new Laguerre Petrov-Galerkin spectral method for efficiently solving 
multi-term fractional initial value problems on the half line with order at most one. We demonstrated the 
tunable accuracy of the method using numerical experiments, and we showed that singularities of the type 
$t^\alpha$ are well-resolved using the GALF basis functions. We discussed the benefits resulting 
from the connection of the trial basis functions with the fractional Sturm-Liouville problems on the half line 
investigated  in \cite{FSL}. Our numerical results show that the method yields spectral convergence in the 
weighted $L^2$-norm on the half line, and that the convergence rate of the method is indeed 
sensitive to the tunable parameter $\alpha_1.$

We motivated the development of our highly efficient and well-conditioned PG method by solving the 
distributed order equation \eqref{disteqn2} following the idea of Diethelm and Ford, 
and we compared the results using both the trapezoid rule and Gauss-Legendre quadrature.

In the future, we will examine methods of analyzing our PG method and derive error estimates in the 
weighted relative $L^2$-norm to prove the spectral convergence of the method demonstrated in the 
numerical results sections. 

%%%%%%%%%%%%%%%%%%%%%%%%%%%%%%%%%%%%%%%%%%%%%%%%%

\bibliographystyle{unsrt}
\bibliography{mybibfile}

\end{document}